\begin{document}

\title[Minkowski concentricity and complete simplices]{Minkowski concentricity and complete simplices}

\author{Ren\'e Brandenberg and Bernardo Gonz\'alez Merino}
\address{Zentrum Mathematik, Technische Universit\"at M\"unchen, Boltzmannstr. 3, 85747 Garching bei M\"unchen, Germany} \email{brandenb@ma.tum.de} \email{bg.merino@tum.de}

\thanks{The second author is partially supported by Consejer\'ia de Industria, Turismo, Empresa e Innovaci\'on de la CARM through Fundaci\'on S\'eneca, Agencia de Ciencia y Tecnolog\'ia de la Regi\'on de Murcia, Programa de Formaci\'on Postdoctoral de Personal Investigador 19769/PD/15 and project 19901/GERM/15, Programme in Support of Excellence Groups of the Regi\'{o}n de Murcia, and by MINECO project reference MTM2015-63699-P, Spain.}

\subjclass[2010]{Primary 52A20; Secondary 52A40, 52A21}

\keywords{Simplices, Complete Bodies, Constant Width, Completions, Geometric Inequalities, Jung Constant,
Asymmetry Measures, Minkowski Asymmetry, Radii, Minkowski Center, Concentricity}

\begin{abstract}
This paper considers the radii functionals (circumradius, inradius, and diameter) as well as the Minkowski asymmetry for
general (possibly non-symmetric) gauge bodies. A generalization of the concentricity inequality (which states that the sum of the inradius and circumradius is not greater than the diameter in general Minkowski spaces) for non-symmetric gauge bodies is derived and a strong connection
between this new inequality, extremal sets of the generalized Bohnenblust inequality, and completeness of simplices is revealed.
\end{abstract}

\maketitle

\section{Introduction} \label{s:intro}

We call any convex, compact set a \cemph{dred}{(convex) body} and denote by $\CK^n$ the family of convex bodies in $\R^n$ (excluding singletons).
For any $K,C \in \CK^n$ the \cemph{dred}{inradius} $r(K,C)$ of $K$ \wrt~$C$ is
the largest $\lambda \ge 0$ such that $\lambda C$ is contained in a translation of $K$, while
the \cemph{dred}{circumradius} $R(K,C)$ of $K$ \wrt~$C$ is the smallest $\lambda > 0$ such that $\lambda C$
contains a translate of $K$. Defining the \cemph{dred}{length} of a segment $[x,y]$ \wrt~$C$ with endpoints $x,y \in \R^n$ by $2R([x,y],C)$, the \cemph{dred}{diameter} $D(K,C)$ of $K$ \wrt~$C$ is the maximal length of a segment with both endpoints in $K$.
For arbitrary $C \in \CK^n$ the \cemph{dred}{Minkowski asymmetry} $s(C)$ of $C$ is the smallest
$\lambda>0$ such that $-C\subset c+\lambda C$ for some $c\in\R^n$, \ie~$s(C)=R(-C,C)$.
Asymmetry functions are often used to extend and unify results, which have natural solutions when restricted to
  centrally symmetric bodies as well as for the general case (see, \eg \cite{BeLa}, \cite{BrKo2}, \cite{GuKa}, \cite{toth2015Measures},  for such results using the Minkowski asymmetry and \cite{Chak}
  using other symmetry measures).

The \cemph{dred}{Jung ratio} of $K$ \wrt~$C$ is the quotient $j(K,C) := R(K,C)/D(K,C)$ between the
circumradius and the diameter, while the \cemph{dred}{Jung constant} $j_C$ of a body $C$ is defined as
$j_C:=\max\set{j(K,C):\,K\in\CK^n}$. Both namings honour Jung, who proved that $j_{\B_2^n}=\sqrt{\frac{n}{2(n+1)}}$
  for the Euclidean ball $\B_2^n$ \cite{Ju}.

A general upper bound for the Jung constant of a symmetric body $C$ is given by the inequality of Bohnenblust \cite{Bo}:
\begin{equation} \label{eq:Bohnenblust}
  j_C \le \frac{n}{n+1}.
\end{equation}

In \cite[Theorem 4.1]{BrKo2} a generalization of Bohnenblust's inequality \eqref{eq:Bohnenblust} for arbitrary $C$ is given, involving the Minkowski asymmetry of $K$ and $C$:

For any $K,C\in\CK^n$ it holds
\begin{equation} \label{eq:ExtBohnenblust}
 j(K,C) \le \frac{s(K)(s(C)+1)}{2(s(K)+1)}
\end{equation}
which results into
\begin{equation} \label{eq:BrKo1}
  j_C \le \frac{n(s(C)+1)}{2(n+1)}, 
\end{equation}
for the Jung constants (which has already been derived in \cite{BrRo}).

Denoting the $n$-dimensional Euclidean unit ball by $\B_2^n$, Santal\'o (in case of $n=2$) \cite{Sa} and later Vr\'ecica \cite[Corollary 1]{Vr} showed that $r(K,\B_2^n)+R(K,\B_2^n) \le D(K,\B_2^n)$.

The above inequality
appears frequently in the literature (see \eg~\cite{BrDaGrLa} and \cite{HeCiPaSaSe}) and has been generalized to arbitrary Minkwoski spaces \cite{CasPap} (and even to general Banach spaces \cite{CasPap,MoPaPh2}), resulting in the
\cemph{dred}{concentricity inequality}
(the name pointing out that for the equality case it is necessary that every incenter is also a circumcenter --
see \cite[Lemma 2.4]{BrGo}):
For all $K,C \in \CK^n$ with $C$ (centrally) symmetric it holds
\begin{equation} \label{eq:r+R<D}
  r(K,C)+R(K,C) \le D(K,C).
\end{equation}
Moreover, in \cite[Remark 6.3]{BrKo2} the concentricity inequality has been embedded into a
chain of inequalities for symmetric $C$, involving (besides others)
the Minkowski asymmetry of $K$ and the extended Bohnenblust inequality
\eqref{eq:ExtBohnenblust} with $s(C)=1$:
\begin{equation}\label{eq:chainMinkowski}
  (1+s(K))r(K,C)\leq r(K,C)+R(K,C)\leq\frac{1+s(K)}{s(K)}R(K,C)\le D(K,C).
\end{equation}

In this paper we give two different generalizations of the inequality chain in \eqref{eq:chainMinkowski}
  for non-symmetric $C$:
\begin{thm} \label{th:sr+R<=s+12D}
  Let $K,C\in\CK^n$. Then the two following inequality chains hold true:
  \begin{equation}\label{eq:sr+R<=s+12D}
    \begin{split}
      (1+s(K))r(K,-C) &\le r(K,-C) + R(K,C) \\
      & \le s(C)r(K,C) + R(K,C) \le \frac12(1+s(C))D(K,C)
    \end{split}
  \end{equation}
  and
  \begin{equation}\label{eq:(1+s)/sR<=s+12D}
    \begin{split}
      (1+s(K))r(K,-C) &\le r(K,-C) + R(K,C) \\
      &\le \frac{1+s(K)}{s(K)} R(K,C) \le \frac12(1+s(C))D(K,C).
    \end{split}
  \end{equation}
\end{thm}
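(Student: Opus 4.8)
My plan is to split both chains into four atomic inequalities and to isolate the single genuinely new one. Writing $s:=s(C)$, $r:=r(K,C)$, $R:=R(K,C)$, $D:=D(K,C)$, I note that after subtracting $r(K,-C)$ the first inequality of both chains reads $s(K)\,r(K,-C)\le R$, that the third inequality of \eqref{eq:(1+s)/sR<=s+12D} is this same statement rewritten, that the middle inequality of \eqref{eq:sr+R<=s+12D} is $r(K,-C)\le s\,r$, and that the last inequality of \eqref{eq:(1+s)/sR<=s+12D} is exactly the extended Bohnenblust inequality \eqref{eq:ExtBohnenblust} after multiplying by $(1+s(K))/s(K)$. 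Thus only (i) $s(K)\,r(K,-C)\le R$, (ii) $r(K,-C)\le s\,r$, and (iii) the last inequality of \eqref{eq:sr+R<=s+12D}, namely $s\,r+R\le\tfrac12(1+s)D$, need proof, the last being the crux.

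For (i) I take an optimal inball $x+r(K,-C)(-C)\subseteq K$ and an optimal circumball $K\subseteq y+RC$. Reflecting the second gives $-K\subseteq -y+R(-C)$, while the first gives $-C\subseteq \tfrac1{r(K,-C)}(K-x)$; substituting yields $-K\subseteq z+\tfrac{R}{r(K,-C)}K$ for a suitable $z$, whence $s(K)=R(-K,K)\le R/r(K,-C)$. For (ii) I use $s(C)=R(-C,C)=R(C,-C)$, so $r(C,-C)=1/R(-C,C)=1/s$; nesting an optimal copy of $\tfrac1s(-C)$ inside $C$ inside an optimal inball of $K$ gives $r(K,-C)\ge r/s$, and running the same argument with $C$ replaced by $-C$ (using $s(-C)=s$) gives $r(K,-C)\le s\,r$.

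The remaining inequality (iii) is the asymmetric concentricity inequality and is the main obstacle. I first normalise the position of $C$ so that its Minkowski centre is the origin; then $h_C(-u)\le s\,h_C(u)$ for every $u$, where $h_A(u):=\max_{x\in A}\langle u,x\rangle$. Since the diameter only sees the central symmetral $C_s:=\tfrac12(C-C)$, one has $D=D(K,C_s)$ and, for every $u$, the breadth bound $D\,h_{C_s}(u)\ge h_K(u)+h_K(-u)$ with $h_{C_s}(u)=\tfrac12(h_C(u)+h_C(-u))$. I next invoke the optimality condition for the circumradius: the origin lies in the convex hull of the contact normals, i.e.\ there are normals $v_1,\dots,v_m$ and weights $\mu_j\ge0$ with $\sum_j\mu_j=1$, $\sum_j\mu_j v_j=0$, and $h_K(v_j)=\langle v_j,y\rangle+R\,h_C(v_j)$ for the circumcentre $y$. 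Combining this in direction $v_j$ with the inball estimate $h_K(-v_j)\ge -\langle v_j,x\rangle+r\,h_C(-v_j)$ (from $x+rC\subseteq K$) gives, for each $j$, $\tfrac{D}{2}\bigl(h_C(v_j)+h_C(-v_j)\bigr)\ge \langle v_j,y-x\rangle+R\,h_C(v_j)+r\,h_C(-v_j)$.

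Finally I average these with the weights $\mu_j$. The term $\sum_j\mu_j\langle v_j,y-x\rangle$ vanishes because $\sum_j\mu_j v_j=0$, leaving $\tfrac{D}{2}(A+B)\ge RA+rB$ with $A:=\sum_j\mu_j h_C(v_j)>0$ and $B:=\sum_j\mu_j h_C(-v_j)>0$. Since $h_C(-v_j)\le s\,h_C(v_j)$ forces $B\le sA$, and since $R\ge r$, the elementary identity $(RA+rB)(1+s)-(R+sr)(A+B)=(R-r)(sA-B)\ge0$ yields $\tfrac{D}{2}\ge\tfrac{R+sr}{1+s}$, which is (iii). I expect the delicate points to be the precise form of the circumradius optimality condition and the verification that the weighted asymmetry bound $B\le sA$ produces exactly the coefficient $\tfrac12(1+s)$; note that the naive route of transferring the symmetric concentricity inequality \eqref{eq:r+R<D} from $C_s$ to $C$ via $\tfrac{1+s}{2s}C\subseteq C_s\subseteq\tfrac{1+s}{2}C$ fails, since the inradius comparison then points the wrong way.
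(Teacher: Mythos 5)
Your proposal is correct, and its overall skeleton is the same as the paper's: you isolate the same four atomic inequalities, settle the leftmost inequality of both chains and the middle one of \eqref{eq:(1+s)/sR<=s+12D} by the same nesting argument that proves the paper's Lemma \ref{lem:Rr-sn} (a), delegate the last inequality of \eqref{eq:(1+s)/sR<=s+12D} to the extended Bohnenblust inequality \eqref{eq:ExtBohnenblust} exactly as the paper does, and identify the generalized concentricity inequality \eqref{eq:pp-concentricity} as the only genuinely new statement. (Minor slip: your ``third inequality of \eqref{eq:(1+s)/sR<=s+12D}'' should read ``middle inequality''; the content is right.) Your proof of $r(K,-C)\le s(C)r(K,C)$ is a direct nesting via $r(-C,C)=1/s(C)$, where the paper detours through the difference body $C-C$ (Lemma \ref{lem:Rr-sn} (b)--(d)); both are elementary, yours slightly shorter. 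The real divergence is in the crux. The paper also invokes Proposition \ref{prop:occ} to get contact normals $a^j$ with $\sum_j\lambda_ja^j=0$, but then \emph{selects one} direction $a^1$ with $c^Ta^1\le 0$ (possible since the weighted average of the $c^Ta^j$ vanishes), bounds the breadth of $\mathrm{conv}(\{p^1\}\cup(c+rC))$ in that single direction, and finishes with Lemma \ref{lem:breadths}, i.e.\ the monotonicity of $(1+rx)/(1+x)$ on $[s(C)^{-1},s(C)]$. You instead \emph{average} the per-direction breadth estimates over all contact normals with the occ weights, so that the circumcenter--incenter offset $\langle v_j,y-x\rangle$ cancels identically, and then close with the weighted asymmetry bound $B\le sA$ together with the identity $(RA+rB)(1+s)-(R+sr)(A+B)=(R-r)(sA-B)\ge 0$. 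Both arguments rest on the same two pillars (the optimal containment condition and the support-function bound $h(C,-u)\le s(C)h(C,u)$ for Minkowski centered $C$); your averaging replaces the paper's sign-selection plus auxiliary lemma by exact cancellation plus algebra, which is arguably cleaner and needs no case choice, whereas the paper's single-direction version is what it reuses immediately after the proof (via Lemma \ref{lem:breadths}) to extract the Minkowski-concentricity consequences of equality, which would be more awkward to read off from your averaged inequality. The small details you leave implicit all hold: the normals in Proposition \ref{prop:occ} can be taken nonzero, the Minkowski center lies in the interior of $C$ so $A,B>0$ (hence $A+B>0$), and $R(K,C)\ge r(K,C)$ gives the sign you need in the identity. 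Your closing remark is also accurate: transferring \eqref{eq:r+R<D} from $\frac12(C-C)$ to $C$ via the inclusions $\frac{1+s}{2s}C\subset\frac12(C-C)\subset\frac{1+s}{2}C$ only yields $r(K,C)+R(K,C)\le\frac12(1+s(C))D(K,C)$, with coefficient $1$ instead of $s(C)$ on the inradius, so the averaging (or the paper's direction-selection) is genuinely needed.
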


Let us remark that for general $K$ the two chains do not fit together.

One should also recognize that the chains include two possible generalizations of the concentricity inequality
to the non-symmetric case, the \cemph{dred}{mirrored concentricity inequality}
\begin{equation} \label{eq:pm-concentricity}
  r(K,-C) + R(K,C) \le \frac12(1+s(C))D(K,C)
\end{equation}
and the stronger \cemph{dred}{generalized concentricity inequality}
\begin{equation} \label{eq:pp-concentricity}
  s(C)r(K,C) + R(K,C) \le \frac12(1+s(C))D(K,C).
\end{equation}

We will see that proving the generalized concentricity inequality is the main ingredient of
proving Theorem \ref{th:sr+R<=s+12D} and that behind the extremality of this inequality lies a
generalized notion of concentricity for two possibly non-symmetric sets.

Given $K,C\in\CK^n$, we say that $K$ is \cemph{dred}{(diametrically) complete} \wrt~$C$ if
any strict superset of $K$ has strictly bigger diameter \wrt~$C$ than $K$. Moreover, we say that $K^*$ is a \cemph{dred}{completion} of $K$ if
$K^*$ is complete, a superset of $K$, and has the same diameter \wrt~$C$ than $K$.
The history of the concentricity inequality is closely related to efforts in understanding complete sets and completions.
Indeed it is known that completions always exist and that complete sets fulfill
equality in the concentricity inequality (see  \cite{Egg} for the Euclidean case, \cite{Sallee}
for abitrary Minkowski spaces,
as well as \cite{BaPa} and \cite[Proposition 6]{CasPap} for a generalization to Banach spaces).
Actually it is shown in \cite{BrGo2} that equality holds for the full chain in \ref{eq:chainMinkowski} if
$K$ is complete \wrt~$C$.
Thus for a general $K$ and a completion $K^*$ of $K$ it holds
$r(K,C)+R(K,C) \le r(K^*,C)+R(K^*,C) = D(K^*,C) = D(K,C)$
(\cf~\cite{Vr}) proving this way the validity of the concentricity inequality (\cf~also \cite{MoSc}).

It is easy to see that for the two inequality chains in Theorem \ref{th:sr+R<=s+12D}
equality between any part and the rightmost for some $K$ and $C$ can also be kept when completing $K$.
Moreover, for complete sets the two inequality chains in  Theorem \ref{th:sr+R<=s+12D} can be joined into
one chain:
\begin{thm} \label{thm:complete-chain}
  Let $K,C \in \CK^n$ such that $K$ is complete \wrt~$C$. Then
  \begin{equation}\label{eq:complete chain}
    \begin{split}
        (1+s(K))r(K,-C) &\le r(K,-C) + R(K,C) \le \frac{1+s(K)}{s(K)} R(K,C) \\
      & \le s(C)r(K,C) + R(K,C) \le \frac12(1+s(C))D(K,C)
    \end{split}
  \end{equation}
  holds true.
\end{thm}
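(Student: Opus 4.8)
The plan is to derive Theorem~\ref{thm:complete-chain} from Theorem~\ref{th:sr+R<=s+12D}, by noting that for complete $K$ only a single new inequality is required to fuse the two chains \eqref{eq:sr+R<=s+12D} and \eqref{eq:(1+s)/sR<=s+12D} into \eqref{eq:complete chain}. First I would check that three of the four links of \eqref{eq:complete chain} hold for all $K,C$, independently of completeness: the link $(1+s(K))r(K,-C)\le r(K,-C)+R(K,C)$ is the common first step of both chains; the link $r(K,-C)+R(K,C)\le\frac{1+s(K)}{s(K)}R(K,C)$ is the middle step of \eqref{eq:(1+s)/sR<=s+12D}; and the link $s(C)r(K,C)+R(K,C)\le\frac12(1+s(C))D(K,C)$ is the generalized concentricity inequality \eqref{eq:pp-concentricity}, i.e. the final step of \eqref{eq:sr+R<=s+12D}. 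Thus everything reduces to the remaining third link
\[
  \frac{1+s(K)}{s(K)}R(K,C)\le s(C)r(K,C)+R(K,C),
\]
which, after subtracting $R(K,C)$ and clearing the denominator, is equivalent to the compact inequality $R(K,C)\le s(K)s(C)\,r(K,C)$.

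To obtain this third link I would route through the diameter. For arbitrary $K,C$ the last step of \eqref{eq:(1+s)/sR<=s+12D} already supplies $\frac{1+s(K)}{s(K)}R(K,C)\le\frac12(1+s(C))D(K,C)$. Hence it suffices to show that for complete $K$ the generalized concentricity inequality \eqref{eq:pp-concentricity} is in fact an equality,
\[
  s(C)r(K,C)+R(K,C)=\tfrac12(1+s(C))D(K,C);
\]
granting this, the right-hand side of the third link equals $\frac12(1+s(C))D(K,C)$, which by the previous display dominates its left-hand side, so the link holds. Together with the three free links this yields \eqref{eq:complete chain}; the reason the two chains of Theorem~\ref{th:sr+R<=s+12D} can be fused in the complete case is precisely that the term $s(C)r(K,C)+R(K,C)$ rises to the diameter bound $\frac12(1+s(C))D(K,C)$, which already dominates the competing term $\frac{1+s(K)}{s(K)}R(K,C)$ of \eqref{eq:(1+s)/sR<=s+12D}.

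The crux, and the step I expect to be the main obstacle, is therefore to prove that completeness forces equality in \eqref{eq:pp-concentricity}. This is the asymmetric counterpart of the classical fact recalled in the introduction that complete sets attain equality in the symmetric concentricity inequality \eqref{eq:r+R<D}. I would prove it through the equality case of the generalized concentricity inequality, which -- as the paper announces -- is governed by a generalized concentricity of $K$ relative to $C$: I would first produce a common center $c$ realizing simultaneously $r(K,C)$ and $R(K,C)$, so that $c+r(K,C)C\subseteq K\subseteq c+R(K,C)C$, and then use completeness to exhibit a diametral chord of $K$ running from the inscribed to the circumscribed copy of $C$; evaluating the $C$-length along this chord should yield the extremal identity. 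Here the Minkowski asymmetry $s(C)$ enters because, in this concentric configuration, the inscribed copies of $C$ and of $-C$ become homothetic with ratio $s(C)$, that is $r(K,-C)=s(C)r(K,C)$, which is exactly equality in the second step of \eqref{eq:sr+R<=s+12D}. Verifying that completeness genuinely delivers such a concentric, diametrally spanning configuration -- and not merely the inequality -- is where the real work sits; I expect it to rest on the structure of complete sets as intersections of translates of a multiple of the symmetric difference body $\tfrac12(C-C)$, with respect to which the diameter, and hence completeness, is unchanged.
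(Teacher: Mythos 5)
Your reduction is exactly the right one, and it coincides with the paper's: three of the four links of \eqref{eq:complete chain} hold for arbitrary $K,C$ by Theorem~\ref{th:sr+R<=s+12D}, and the whole content of the theorem is the single new link
$\frac{1+s(K)}{s(K)}R(K,C)\le s(C)r(K,C)+R(K,C)$, equivalently $R(K,C)\le s(K)s(C)\,r(K,C)$, which is precisely the paper's Lemma~\ref{thm:rFT}. However, your proposed proof of this link rests on a false claim: that completeness of $K$ with respect to $C$ forces equality in the generalized concentricity inequality \eqref{eq:pp-concentricity}. That is a symmetric-gauge fact (the classical result you cite), and it is exactly what does \emph{not} survive the generalization to non-symmetric $C$: the paper states this right after Theorem~\ref{thm:complete-chain} in the introduction, and Example~\ref{example:ScompleteBUTnotpseudocomplete} exhibits a simplex $S$ complete with respect to $C$ for which $s(C)r(S,C)+R(S,C)<\frac12(1+s(C))D(S,C)$ holds strictly. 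Indeed, for simplices, equality in \eqref{eq:pp-concentricity} is condition \eqref{(i)schain} of Theorem~\ref{th:Schain}, which is strictly stronger than completeness; were your claim true, every complete simplex would satisfy all conditions of that theorem, which the example refutes. Your auxiliary claim that the concentric configuration yields $r(K,-C)=s(C)r(K,C)$ fails for complete sets as well: the paper's example $K=C-C$ gives strict inequality there.

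The missing link is instead obtained by symmetrizing the gauge, which is the paper's route and is close in spirit to your final sentence, only aimed at a different target. By Proposition~\ref{CcompleteC-C}, $K$ is complete with respect to $C$ if and only if it is complete with respect to $C-C$; for the symmetric gauge $C-C$ the known identity for complete sets (cited in the paper from \cite[Theorem 1.2]{BrGo}) gives $R(K,C-C)=s(K)\,r(K,C-C)$; and Lemma~\ref{lem:Rr-sn}~\eqref{RrC-C_iii}, namely
\begin{equation*}
  s(C)\ \ge\ \frac{R(K,C)\,r(K,C-C)}{r(K,C)\,R(K,C-C)},
\end{equation*}
then converts this identity into $R(K,C)\le s(K)s(C)\,r(K,C)$, which is the link you need. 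In short, the completeness hypothesis must be spent on the circumradius--inradius ratio with respect to the difference body $C-C$, not on attaining the diameter bound \eqref{eq:pp-concentricity}, which complete sets in general do not attain.
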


The equality case of \eqref{eq:Bohnenblust} has been first studied in \cite{Le}  and extended in \cite[Corollary 2.10]{BrGo}:

\begin{prop}\label{prop:ScompleteC-C}
Let $S,C\in\CK^n$ be such that  $S$ is an $n$-simplex and $C$ symmetric. Then the following are equivalent:
\begin{enumerate}[(i)]
\item $S-S\subset D(S,C)C\subset(n+1)((S-c)\cap(-S+c))$, for some $c\in\R^n$,
\item $S$ fulfills the full chain \eqref{eq:chainMinkowski} with equality,
\item $S$ fulfills \eqref{eq:r+R<D} with equality,
\item $S$ fulfills \eqref{eq:Bohnenblust} with equality, and
\item $S$ is complete \wrt~$C$.
\end{enumerate}
\end{prop}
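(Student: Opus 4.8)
My plan is to isolate the two scalar inequalities packed into the chain \eqref{eq:chainMinkowski}, dispatch the equivalences among (i), (ii), (iii) and (iv) by pure bookkeeping, and reserve the genuinely geometric work for the links to completeness and to the Bohnenblust extremizers. Throughout I use the classical fact that an $n$-simplex has Minkowski asymmetry $s(S)=n$ with centroid as Minkowski center, so that for symmetric $C$ the chain \eqref{eq:chainMinkowski} instantiates to $(n+1)\,r(S,C)\le r(S,C)+R(S,C)\le\tfrac{n+1}{n}R(S,C)\le D(S,C)$; reading off its pieces gives the two inequalities $n\,r(S,C)\le R(S,C)$ and $R(S,C)\le\tfrac{n}{n+1}D(S,C)$ (the latter being \eqref{eq:Bohnenblust} for $s(C)=1$), and hence $r(S,C)\le\tfrac{1}{n+1}D(S,C)$.

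Next I would reduce (i). Since $C=-C$, the inclusion $\tfrac{D(S,C)}{n+1}C\subseteq(S-c)\cap(-S+c)$ is equivalent to $c+\tfrac{D(S,C)}{n+1}C\subseteq S$, \ie~to $r(S,C)\ge\tfrac{1}{n+1}D(S,C)$, while the left inclusion $S-S\subseteq D(S,C)C$ holds automatically because $D(S,C)=\max\set{\|z\|_C:z\in S-S}$, where $\|\cdot\|_C$ is the norm with unit ball $C$. Thus (i) says exactly $r(S,C)=\tfrac{1}{n+1}D(S,C)$, which by the displayed scalar inequalities forces both $n\,r(S,C)=R(S,C)$ and $R(S,C)=\tfrac{n}{n+1}D(S,C)$, \ie~equality throughout the instantiated chain; this is (ii), and (ii) trivially returns (i) and (iv). The same arithmetic yields (ii)$\Leftrightarrow$(iii): feeding $n\,r\le R$ and $R\le\tfrac{n}{n+1}D$ into $r+R\le\tfrac{n+1}{n}R\le D$ shows that $r(S,C)+R(S,C)=D(S,C)$ can hold only if both scalar inequalities are tight, which is (ii), while (ii)$\Rightarrow$(iii) is immediate.

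It remains to tie (iv) and (v) into this block, which is where the simplex geometry enters. For (iv)$\Rightarrow$(ii) I would invoke the description of the Bohnenblust extremizers from \cite{Le} and \cite[Corollary 2.10]{BrGo}: after placing the circumcenter at the centroid $0$, equality $R(S,C)=\tfrac{n}{n+1}D(S,C)$ forces all $n+1$ vertices onto $\partial(R(S,C)C)$ with $0$ their centroid and with incenter, circumcenter and centroid coinciding (the last via the concentricity statement \cite[Lemma 2.4]{BrGo}). This regularity \wrt~$C$ is exactly what pins the maximal inscribed homothet of $C$ to radius $\tfrac1n R(S,C)=\tfrac{1}{n+1}D(S,C)$, supplying the missing equality $n\,r(S,C)=R(S,C)$ and hence (ii). Turning the vertex/contact data into this inscribed-radius statement is, I expect, the main obstacle, since it must upgrade the scalar Bohnenblust equality to information about $r(S,C)$.

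Finally I connect (v). The direction (v)$\Rightarrow$(ii) is free: by \cite{BrGo2} a complete set satisfies \eqref{eq:chainMinkowski} with equality, which for our simplex is (ii). For (i)$\Rightarrow$(v) I would use the maximality description of completeness: since $p\mapsto\|z-p\|_C$ is convex, a point $z$ keeps the diameter $\le D(S,C)$ iff $z\in v_i+D(S,C)C$ for every vertex $v_i$, so $S$ is complete \wrt~$C$ iff $S=\bigcap_i\bigl(v_i+D(S,C)C\bigr)$, the inclusion $\subseteq$ being automatic. For $\supseteq$, write $S=\bigcap_i H_i^+$ with $H_i^+$ the closed halfspace bounded by $\mathrm{aff}(F_i)$, $F_i$ the facet opposite $v_i$, and let $o=0$ be the common in-/circumcenter $=$ centroid from (i). The homothety $\phi_i$ with center $g_i=-v_i/n\in\mathrm{aff}(F_i)$ and ratio $\tfrac{1}{n+1}$ sends $v_i$ to $o$, hence $v_i+D(S,C)C$ to the inball $o+\tfrac{1}{n+1}D(S,C)C\subseteq S\subseteq H_i^+$; since a positive-ratio homothety centred on $\partial H_i^+$ preserves $H_i^+$, applying $\phi_i^{-1}$ gives $v_i+D(S,C)C\subseteq H_i^+$ for every $i$, whence $\bigcap_i(v_i+D(S,C)C)\subseteq\bigcap_i H_i^+=S$. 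Thus (i)$\Rightarrow$(v), and together with the cyclic implications above all five conditions are equivalent; the only essential difficulty is the structural input (incenter $=$ centroid, and the value of $r(S,C)$) fed in from the extremizer analysis.
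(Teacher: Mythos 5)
The paper itself never proves Proposition \ref{prop:ScompleteC-C}: it is imported from \cite[Corollary 2.10]{BrGo}, so your argument has to stand entirely on its own. Its scalar skeleton does: reading \eqref{eq:chainMinkowski} for $s(S)=n$ as the two independent inequalities $nr(S,C)\le R(S,C)$ and $R(S,C)\le\frac{n}{n+1}D(S,C)$, and reducing (i) to $r(S,C)\ge\frac{1}{n+1}D(S,C)$, correctly yields (i)$\Leftrightarrow$(ii)$\Leftrightarrow$(iii) and (ii)$\Rightarrow$(iv), and citing \cite{BrGo2} for (v)$\Rightarrow$(ii) is legitimate (this paper makes the same citation). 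The fatal gap is (iv)$\Rightarrow$(ii). There you ``invoke the description of the Bohnenblust extremizers from \cite{Le} and \cite[Corollary 2.10]{BrGo}'' --- but the latter \emph{is} the proposition being proved, so this is circular --- and you then concede that upgrading the vertex/contact data to the inradius equality $nr(S,C)=R(S,C)$ is ``the main obstacle,'' leaving it unproved. That single step is the entire non-trivial content of the proposition; everything else is bookkeeping. A non-circular route is the one this paper uses for its non-symmetric analogue, Theorem \ref{th:Schain}, step \eqref{(ii)schain}$\Rightarrow$\eqref{(iii)schain}: Jung equality forces optimality throughout $\frac{n+1}{n}S\subset S-S\subset D(S,C)C$; at the point $\frac{n+1}{n}p^i\in\bd(S-S)$ the unique outer normal of $S-S$ is $p^i$, so by optimality of the containment (Proposition \ref{prop:occ}) it is also an outer normal of $D(S,C)C$ there, whence $D(S,C)C\subset(n+1)(-S)$, and by symmetry of $C$ also $D(S,C)C\subset(n+1)S$, which is (i) with $c=0$. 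Without something of this kind, (iv) stays disconnected from the other four conditions.

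The second gap sits in (i)$\Rightarrow$(v): the spherical-intersection criterion and the homothety argument are correct \emph{provided} the inball of radius $\frac{D(S,C)}{n+1}$ is centered at the centroid, but you simply declare the $c$ from (i) to be ``the common in-/circumcenter $=$ centroid,'' while (i) only supplies \emph{some} $c$ (and \cite[Lemma 2.4]{BrGo} only identifies incenters with circumcenters, not with the centroid). The claim is true, and it can be proved with a tool already in the paper. Write $S=\bigcap_k\set{x\in\R^n:a_k^Tx\le b_k}$ with vertices $v_k$ (so $a_k^Tv_j=b_k$ for $j\neq k$) and centroid $g$; then $a_k^Tv_k=(n+1)a_k^Tg-nb_k$. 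The inclusion $S-S\subset(n+1)(c-S)$ contained in (i), tested against the facet normals of $(n+1)(c-S)$, reads $b_k-a_k^Tv_k\le(n+1)\bigl(b_k-a_k^Tc\bigr)$, which simplifies to $a_k^T(g-c)\ge0$ for every $k$. Proposition \ref{prop:Minkowski} gives positive weights $\alpha_k$ with $\sum_k\alpha_k a_k=0$, so all these inner products vanish, and since the $a_k$ span $\R^n$ we get $c=g$. Without this step, $\phi_i(v_i+D(S,C)C)$ is not the inball you placed inside $S$, and the key containment $v_i+D(S,C)C\subset H_i^+$ does not follow.
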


While for symmetric $C$ the full chain \eqref{eq:chainMinkowski} is fulfilled with equality whenever $K$ is
  complete \wrt~$C$, in the general case there exist examples of $K$ complete \wrt~$C$,
  showing that none of the inequalities in \eqref{eq:complete chain} must hold with equality. On the one hand for example,
  if we have $K=C-C$, then we may easily calculate that
   \begin{equation*}
    \begin{split}
        (1+s(K))r(K,-C) &< r(K,-C) + R(K,C) < \frac{1+s(K)}{s(K)} R(K,C) \\
      & = s(C)r(K,C) + R(K,C) = \frac12(1+s(C))D(K,C),
    \end{split}
  \end{equation*}
  whereas on the other hand Example \ref{example:ScompleteBUTnotpseudocomplete} will give a pair $K$ complete \wrt~$C$
  such that
  \begin{equation*}
    \begin{split}
      (1+s(K))r(K,-C) &= r(K,-C) + R(K,C) = \frac{1+s(K)}{s(K)} R(K,C) \\
      & < s(C)r(K,C) + R(K,C) < \frac12(1+s(C))D(K,C).
    \end{split}
  \end{equation*}

In the following we write $K \subset_t C$ (resp.~$K =_t C$) for any two convex sets $K,C$ to stress that there exists a
translation vector $c$ such that $K \subset c+C$ (resp. $K=c+C$) and abbreviate by
$K \subset^{\opt} C$ that $K \subset C$, but $K \not \subset_t \rho C$ for any $\rho<1$.

Studying the equality case of \eqref{eq:BrKo1} results in the following extension of Proposition \ref{prop:ScompleteC-C}
to arbitrary $C$:
\begin{thm}\label{th:Schain}
  Let $S,C\in\CK^n$ be such that $S$ is an $n$-simplex. Then the following are equivalent:
  \begin{enumerate}[(i)]
  \item \label{(iii)schain} $(n+1)/n \ S \subset_t S-S \subset \frac{D(S,C)}{2}(C-C) \subset_t (s(C)+1)\frac{D(S,C)}{2}C\subset_t(n+1)(-S)$, and
  \item \label{(iv)schain} $S$ fulfills the full chain \eqref{eq:sr+R<=s+12D} as well as the
    full chain \eqref{eq:(1+s)/sR<=s+12D} (and therefore the joint chain \eqref{eq:complete chain})
    with equality,
  \item \label{(i)schain} $S$ fulfills \eqref{eq:pp-concentricity} with equality,
  \item \label{(ii)schain} $S$ fulfills \eqref{eq:BrKo1} with equality,
  \item \label{(v)schain} $S$ is complete \wrt~$C$ and $R(S,C)=ns(C)r(S,C)$.
  \end{enumerate}
\end{thm}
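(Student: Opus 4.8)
The plan is to prove the five conditions equivalent by reducing almost everything to three structural facts about simplices and the symmetrization $\tilde C := \frac12(C-C)$, isolating the equality analysis of the extended Bohnenblust inequality \eqref{eq:ExtBohnenblust} as the single genuinely geometric ingredient. First I would record the elementary identities $s(S)=n$, $r(S,-C)=r(-S,C)$ and $R(C,-S)=1/r(S,-C)$, together with the two monotonicities $r(S,-C)\le s(C)r(S,C)$ and $n\,r(S,-C)\le R(S,C)$ that produce the first links of the chains in Theorem \ref{th:sr+R<=s+12D}.

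Next I would analyze the inclusion chain \ref{(iii)schain} link by link and observe that three of its four inclusions hold unconditionally: $\frac{n+1}{n}S\subset_t S-S$ is the standard difference-body estimate for simplices, $S-S\subset\frac{D(S,C)}{2}(C-C)$ is the definition of the diameter through $\tilde C$, and $\frac{D(S,C)}{2}(C-C)\subset_t(s(C)+1)\frac{D(S,C)}{2}C$ is just $C-C\subset_t(s(C)+1)C$, the defining inclusion of $s(C)$. Hence \ref{(iii)schain} is equivalent to its final inclusion alone, which by $R(C,-S)=1/r(S,-C)$ unwinds to $(s(C)+1)D(S,C)\le 2(n+1)r(S,-C)$. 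Since the reverse inequality is exactly the extremal link $(1+s(K))r(K,-C)\le\frac12(1+s(C))D(K,C)$ of both chains (with $s(S)=n$), this forces the two endpoints of each chain to coincide, collapsing both chains completely; thus \ref{(iii)schain}$\Leftrightarrow$\ref{(iv)schain}.

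The remaining links among \ref{(iv)schain}, \ref{(i)schain} and \ref{(ii)schain} I would settle algebraically. From the collapsed chains one reads off both $R(S,C)=ns(C)r(S,C)$ and the generalized concentricity equality \ref{(i)schain}, and substituting $s(C)r(S,C)=R(S,C)/n$ into \eqref{eq:pp-concentricity} turns it into $R(S,C)=\frac{n(s(C)+1)}{2(n+1)}D(S,C)$, i.e.\ equality in \eqref{eq:BrKo1}; this gives \ref{(iv)schain}$\Rightarrow$\ref{(i)schain} and \ref{(iv)schain}$\Rightarrow$\ref{(ii)schain}. Conversely, equality in \eqref{eq:BrKo1} immediately renders the rightmost link of \eqref{eq:(1+s)/sR<=s+12D} tight, while \eqref{eq:pp-concentricity} with equality combined with \eqref{eq:BrKo1} only yields $R(S,C)\le ns(C)r(S,C)$; in both cases the sole missing equality needed to return to \ref{(iv)schain} is $n\,r(S,-C)=R(S,C)$.

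Producing this last equality is where I expect the real work. The plan is to extract it from the equality case of the proof of \eqref{eq:ExtBohnenblust}/\eqref{eq:BrKo1} specialized to $s(K)=n$, where Bohnenblust extremality should force the vertices of $S$ to touch its circumscribed copy of $C$ in a configuration rigid enough to pin down $r(S,-C)$; this is the step analogous to, but more delicate than, the corresponding implication in Proposition \ref{prop:ScompleteC-C}. Finally, for completeness \ref{(v)schain} I would use that $D(\cdot,C)$ depends only on $\tilde C$, so that $S$ is complete with respect to $C$ if and only if it is complete with respect to the symmetric body $\tilde C$; Proposition \ref{prop:ScompleteC-C} then applies verbatim to the pair $(S,\tilde C)$ and supplies the symmetric chain $S-S\subset D(S,C)\tilde C\subset(n+1)((S-c)\cap(-S+c))$, while the supplementary hypothesis $R(S,C)=ns(C)r(S,C)$ is exactly what transports this structure from $\tilde C$ back to $C$, upgrading the middle inclusion into the extremal link $(s(C)+1)\frac{D(S,C)}{2}C\subset_t(n+1)(-S)$ of \ref{(iii)schain}. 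The main obstacles are therefore the Bohnenblust equality analysis and this transport step; everything else is bookkeeping with the three automatic inclusions and the identity $D(\cdot,C)=D(\cdot,\tilde C)$.
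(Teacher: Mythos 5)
Your bookkeeping reductions are correct and match the paper: the first three inclusions in \eqref{(iii)schain} hold for every pair $(S,C)$, so \eqref{(iii)schain} is equivalent to the single inclusion $(s(C)+1)\frac{D(S,C)}{2}C\subset_t(n+1)(-S)$, i.e.\ to the coincidence of the two common endpoints of the chains \eqref{eq:sr+R<=s+12D} and \eqref{eq:(1+s)/sR<=s+12D}; this is exactly Remark \ref{rem:pseudo}, which the paper invokes for \eqref{(iii)schain}$\Leftrightarrow$\eqref{(iv)schain}, and your derivation of \eqref{(i)schain} and \eqref{(ii)schain} from \eqref{(iv)schain} is fine. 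The first genuine gap is that the step you yourself call \enquote{the real work} is never carried out: the equality $n\,r(S,-C)=R(S,C)$ is not derived from equality in \eqref{eq:BrKo1}, only conjectured to follow from \enquote{the equality case of the proof of \eqref{eq:ExtBohnenblust}}, a proof that lives in \cite{BrKo2} and is neither reproduced nor analyzed here. This is precisely where the paper works: with $S$ regular and centered, equality in \eqref{eq:BrKo1} forces optimality of every inclusion in \eqref{eq:ExtendedJung}, in particular of $S-S\subset_t\frac{D(S,C)}{2}(s(C)+1)C$; the vertex $\frac{n+1}{n}p^i$ of $\frac{n+1}{n}S$ lies in a facet of $S-S$ whose unique outer normal is $p^i$, and an optimal containment of the simplex $\frac{n+1}{n}S$ must, by Proposition \ref{prop:occ}, use touching points whose normals positively span $0$, hence all $n+1$ vertices; so each $p^i$ is inherited as an outer normal of the circumscribed copy of $\frac{D(S,C)}{2}(s(C)+1)C$ at $\frac{n+1}{n}p^i$, which yields $\frac{D(S,C)}{2}(s(C)+1)C\subset\bigcap_{i}\set{x:x^Tp^i\le\frac{n+1}{n}}=(n+1)(-S)$. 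Without this argument or a substitute there is no path from \eqref{(ii)schain} back to \eqref{(iii)schain}/\eqref{(iv)schain}, and the equivalence does not close.

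Second, two of your remaining reductions are flawed. Starting from equality in \eqref{eq:pp-concentricity}, you claim the \enquote{sole missing equality} needed to reach \eqref{(iv)schain} is $n\,r(S,-C)=R(S,C)$. That is not enough: granting it, the first and last links of \eqref{eq:sr+R<=s+12D} are tight, but the middle link $r(S,-C)+R(S,C)\le s(C)r(S,C)+R(S,C)$ still requires $r(S,-C)=s(C)r(S,C)$, i.e.\ $R(S,C)=ns(C)r(S,C)$, of which you only have the inequality $R(S,C)\le ns(C)r(S,C)$; worse, your plan obtains the missing equality from Bohnenblust extremality, i.e.\ from \eqref{(ii)schain}, which is not available in this branch, so the route is circular. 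Finally, for \eqref{(v)schain} the assertion that $R(S,C)=ns(C)r(S,C)$ \enquote{is exactly what transports} the symmetric chain for $(S,C-C)$ into $(s(C)+1)\frac{D(S,C)}{2}C\subset_t(n+1)(-S)$ names no mechanism; since $C-C\subset_t(s(C)+1)C$, you are asking a strictly larger circumscribed body to fit into $(n+1)(-S)$, which is the whole difficulty. The paper instead proves \eqref{(v)schain}$\Rightarrow$\eqref{(ii)schain} by propagating equalities through Lemma \ref{lem:Rr-sn}: completeness gives $R(S,C-C)=n\,r(S,C-C)$ via Propositions \ref{CcompleteC-C} and \ref{prop:ScompleteC-C}, the hypothesis $R(S,C)=ns(C)r(S,C)$ then forces equality in Lemma \ref{lem:Rr-sn} \eqref{RrC-C_iii} and hence in \eqref{(i)RrC-C}, i.e.\ $R(S,C)=(s(C)+1)R(S,C-C)$, and $D(S,C)=2D(S,C-C)$ turns this into equality in \eqref{eq:BrKo1}; only then does the normal-cone argument lead back to \eqref{(iii)schain}. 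In short, your skeleton agrees with the paper's, but all three substantive implications are missing or broken.
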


Observe that the leftmost inclusion in Part \eqref{(iii)schain} above is always true (and thus could have been added to the chain of inclusion in Part (i) of Proposition \ref{prop:ScompleteC-C}, too). However,
we only mention it here since it holds $(n+1)/n \ S \subset_t^{\opt} (n+1)(-S)$.

One should also observe that the chain of inclusion under translations in Part \eqref{(iii)schain} above becomes a direct chain of inclusions if
and only if $S$ and $C$ have the origin as a common Minkowski center.

\section{Preliminaries}\label{s:notation}

For any $A\subset \R^n$ we denote by $\bd(A)$ the \cemph{dred}{boundary} of $A$,
by $\conv(A)$ the \cemph{dred}{convex hull} of $A$
and abbreviate by $[x,y] := \conv(\set{x,y})$ the line segment with endpoints $x,y\in\R^n$.

For any $\rho>0$ and $A, B\subset\R^n$ let $A+B :=\set{a+b:a\in A,\,b\in B}$ denote the
\cemph{dred}{Minkowski sum} of $A$ and $B$ and $\rho A := \set{\rho a : a\in A}$ the \cemph{dred}{$\rho$-dilatation} of $A$,
abbreviating $-A:=(-1)A$.



The \cemph{dred}{support function} of a convex body $K\in\CK^n$ is the function
$h(K,\,\cdot\,):\R^n \rightarrow \R$, $h(K,a)=\sup_{x \in K} a^Tx$ and
the \cemph{dred}{normal cone} of $C\subset\R^n$ in a point $p\in C$ is the set
$N(C,p)=\set{a\in\R^n:a^Tp=h(C,a)}$.
We write $H_{a,b}:=\set{x\in\R^n:a^Tx=b}$ for the
\cemph{dred}{hyperplane} orthogonal to $a\in\R^n$ with value $b\in\R$, respectively.
\medskip

The circumradius of $K \in \CK^n$ with respect to $C\in\CK^n$ can be formalized as
$R(K,C)=\inf\set{\rho > 0 : K \subset_t \rho C}$ (with $R(K,C)= \infty$ if and only
  if the affine hull of $K$ is not a subset of the affine hull off $C$).
Analogously, the inradius is $r(K,C) = \sup\set{\rho \ge 0 : \rho C \subset_t K}$ and we have that
$r(K,C) = R(C,K)^{-1}$.

The following Proposition is taken from \cite[Theorem 2.3]{BrKo} and gives a
characterization for $K\subset^{\opt}C$. Here and below we use $[n]$ to abbreviate $\set{1,\dots,n}$ for some $n \in \N$.

\begin{prop}[Optimal Containment Condition]\label{prop:occ}
Let $K,C\in\CK^n$ and $K\subset C$. Then the following are equivalent:
\begin{enumerate}[(i)]
\item $R(K,C)=1$.
\item there exist $i\in\set{2,\dots,n+1}$, $p^j\in K\cap \bd(C)$,
and $a^j\in N(C,p^j)$, $j\in[i]$ such that $0\in\conv(\set{a^j:j\in[i]})$.
\end{enumerate}
Moreover, if $C=\B_2^n$ then $(i)$ and $(ii)$ are also equivalent to
\begin{enumerate}[(i)]
\item[(ii')] there exist $i\in\set{2,\dots,n+1}$ and $p^j\in K\cap \bd(C)$, $j\in[i]$, such that $0\in\conv(\set{p^j:j\in[i]})$.
\end{enumerate}
\end{prop}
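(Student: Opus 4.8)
The plan is to treat the two directions of the equivalence separately, after normalizing the configuration. Since translating $K$ and $C$ by a common vector changes neither $R(K,C)$ nor the normal cones $N(C,p)$, I would first assume without loss of generality that $0\in\mathrm{int}(C)$, so that $h(C,a)>0$ for every $a\neq 0$. Throughout I read the outer normals $a^j$ in (ii) as nonzero vectors (one may take them of unit length); the lower bound $i\ge 2$ then simply reflects that $0$ can never lie in the convex hull of a single nonzero vector.

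For the implication (ii)$\Rightarrow$(i) I would argue by a direct support-function computation. From $K\subset C$ one gets $R(K,C)\le 1$, so it suffices to exclude $K\subset_t\rho C$ for $\rho<1$. If $K\subset t+\rho C$, then testing support functions in the directions $a^j$ and using $p^j\in K\cap\bd(C)$ with $a^j\in N(C,p^j)$ (so that $h(C,a^j)=(a^j)^Tp^j=h(K,a^j)$) yields $(1-\rho)h(C,a^j)\le (a^j)^Tt$ for each $j$. Taking the convex combination with weights $\lambda_j\ge 0$, $\sum_j\lambda_j=1$ furnished by $0=\sum_j\lambda_j a^j$ collapses the right-hand side to $0$, while $0\in\mathrm{int}(C)$ forces $\sum_j\lambda_j h(C,a^j)>0$; hence $1-\rho\le 0$, a contradiction. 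This settles (ii)$\Rightarrow$(i).

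The substance is the converse (i)$\Rightarrow$(ii). Here I would encode optimality of the containment through the convex function $g(t):=\max_{\|a\|=1}\frac{h(K,a)-a^Tt}{h(C,a)}$, which is a pointwise maximum of affine functions of $t$ and satisfies $\min_t g(t)=R(K,C)=1=g(0)$, so that $t=0$ minimizes $g$. The active directions at $0$ are exactly the unit $a$ with $h(K,a)=h(C,a)$, and each such $a$ is an outer normal of $C$ at some touching point $p\in K\cap\bd(C)$. Computing $\partial g(0)$ by Danskin's theorem gives $\partial g(0)=\conv\set{-a/h(C,a):a\text{ active}}$, and the optimality condition $0\in\partial g(0)$ rewrites (after clearing the positive scalars $h(C,a)^{-1}$ and renormalizing the coefficients) as $0\in\conv\set{a:a\text{ active}}$. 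Finally Carath\'eodory's theorem reduces this to at most $n+1$ active normals, giving the required $p^j,a^j$ with $2\le i\le n+1$. I expect the main obstacle to be this direction: one must justify the subdifferential formula for a maximum taken over the whole sphere and, equivalently in a purely geometric rendering, show via a compactness estimate that if the touching normals all lay strictly in an open halfspace (\ie~$0\notin\conv$ of them) then a small translation of $K$ opposite to the separating direction would leave room to shrink $C$, contradicting $R(K,C)=1$; the delicate point there is controlling the directions \emph{near} the active set, where the slack $h(C,a)-h(K,a)$ degenerates to $0$.

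For the ball case $C=\B_2^n$ I would use that the unit ball is smooth with $N(\B_2^n,p)=\set{\lambda p:\lambda\ge 0}$ for $p\in\bd(\B_2^n)$, so at a touching point the (nonzero) outer normal is a positive multiple of $p$ itself. Consequently $0\in\conv(\set{a^j:j\in[i]})$ holds if and only if $0\in\conv(\set{p^j:j\in[i]})$, which is exactly the equivalence of (ii) and (ii').
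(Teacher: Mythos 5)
Your proposal cannot be compared to an in-paper argument, because the paper does not prove this proposition at all: it is imported as a black box from \cite[Theorem 2.3]{BrKo}. Judged on its own merits, your proof is correct. The direction (ii)$\Rightarrow$(i) is exactly the standard computation: from $p^j\in K\cap\bd(C)$ and $a^j\in N(C,p^j)$ one gets $h(K,a^j)=h(C,a^j)$, hence $(1-\rho)h(C,a^j)\le (a^j)^Tt$ whenever $K\subset t+\rho C$, and averaging with the weights coming from $0=\sum_j\lambda_ja^j$ forces $\rho\ge 1$. For (i)$\Rightarrow$(ii), writing $R(K,C)=\min_t g(t)$ with $g(t)=\max_a\,(h(K,a)-a^Tt)/h(C,a)$, observing that $t=0$ is a minimizer with active set $A=\set{a: h(K,a)=h(C,a)}$, and then combining $0\in\partial g(0)$ with the Ioffe--Tikhomirov/Danskin max-formula, the rescaling that clears the positive factors $h(C,a)^{-1}$, and Carath\'eodory is a complete and standard route; the point you flag as delicate is handled exactly as you suggest, since $A$ is a compact subset of the sphere and $a\mapsto -a/h(C,a)$ is continuous on it, so the convex hull in the subdifferential formula is compact and no closure operation is missing (equivalently, your geometric fallback closes via the compactness estimate on near-active directions that you describe). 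The reduction of (ii) to (ii') for $C=\B_2^n$ via $N(\B_2^n,p)=\set{\lambda p:\lambda\ge 0}$ is also correct.

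Two of your side remarks deserve emphasis because they are not cosmetic. First, reading the $a^j$ as nonzero is necessary for the statement to be true at all: with the paper's definition one always has $0\in N(C,p)$, so allowing $a^j=0$ would make (ii) trivially satisfiable whenever $K\cap\bd(C)\neq\emptyset$, which does not imply $R(K,C)=1$. Your reading agrees with how the paper actually invokes the proposition, namely with $a^j\in N(C,p^j)\setminus\set{0}$ in the proof of Theorem \ref{th:sr+R<=s+12D}. Second, your normalization $0\in\mathrm{int}(C)$ silently assumes $C$ is full-dimensional; this is a genuine hypothesis rather than a loss of generality, since the paper's $\CK^n$ formally admits lower-dimensional bodies, and for those the statement degenerates (any nonzero vector orthogonal to the affine hull of $C$ lies in every normal cone, so (ii) would hold vacuously). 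It would be cleaner to state full-dimensionality of $C$ explicitly; it is satisfied in every application of the proposition in the paper.
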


The \cemph{dred}{$s$-breadth} $b_s(K,C)$ of $K$ \wrt~$C$ in the direction $s\in\R^n\setminus\set{0}$
is $b_s(K,C)=2\frac{h(K-K,s)}{h(C-C,s)}$. The $s$-breadth does not change under central
symmetrization (\cf~\cite{BrKo2}), \ie
\[ b_s(K,C)=\frac{1}{2}b_s(K-K,C)=2b_s(K,C-C)=b_s(K-K,C-C). \]
With help of the $s$-breath the diameter $D(K,C) = 2 \sup_{x,y \in K} R([x,y],C)$ of $K$ \wrt~$C$
can also be expressed as $D(K,C) = \sup_{s \in \R^n\setminus\set{0}} b_s(K,C)$ (see \cite{BrKo2}).
Moreover, denoting the norm induced by the gauge body $1/2(C-C)$ by $\norm_{1/2(C-C)}$  we also have
$D(K,C) = \max_{x,y \in K}\norm[y-x]_{1/2(C-C)}$.
However, denoting also the gauge function induced by a possibly non-symmetric $C$ by $\norm_{C}$
the diameter of $K$ may differ from $\max_{x,y \in K}\norm[y-x]_{C}$.
One may think that the length of a segment $[x,y]$ should be defined
  by the value $\norm[y-x]_C$ of the gauge function. However, we think (as it was already formulated in \cite[pg.~134]{DaGrKl})
  that this measures should
  be translation invariant and symmetric (since segments as 1-dimensional objects are always symmetric).

All three radii $r,D,R$ are non-decreasing and homogeneous of degree $1$ in the first argument, as well as
non-increasing and homogeneous of degree $-1$ in the second.
This means, \eg, that $R(K_1,C)\leq R(K_2,C)$ if $K_1\subset K_2$ and $R(\lambda K,C)=\lambda R(K,C)$,
for any $\lambda\ge 0$.
Moreover, all three radii are continuous (\wrt~the Hausdorff metric) in both arguments
and affine invariant (in the sense that, \eg~$R(A(K),A(C))=R(K,C)$, for any $n$-dimensional
regular affine transformation $A$).

The Minkowski asymmetry $s(K)$
can be formalized as
$s(K) := \inf\set{\rho > 0 :-K \subset_t \rho K} = R(-K,K)$.
If $-(K-c) \subset s(K) (K-c)$ for some $c \in \R^n$ then $c$ is called a \cemph{dred}{Minkowski center} of $K$,
and if $c=0$ is a Minkowski center of $K$, then $K$ is called \cemph{dred}{Minkowski centered}.
Moreover, we say that  $K$ is \cemph{dred}{Minkowski concentric} (or \cemph{dred}{mirrored Minkowski concentric}) \wrt~$C$,
if there exists a Minkowski center $c$ of $C$ and a translation $t \in \R^n$ such that
$r(K,C)(C-c) \subset K-t \subset R(K,C)(C-c)$ (or $-r(K,-C)(C-c)\subset K-t \subset R(K,C)(C-c)$, respectively).
Now, consider the case that we can choose $t$ above to be a Minkowski center of $K$. Then it is easy to see that the
Minkowski concentricity of $K$ \wrt~$C$ is equivalent to the Minkowski concentricity of $C$ \wrt~$K$. In this case we simply say that
$K$ and $C$ are \cemph{dred}{Minkowski concentric}.
One should also recognize that if $K$ is mirrored Minkowski concentric \wrt~$C$ with $t$ a Minkowski center of $K$, then $-C$
is mirrored Minkowski concentric \wrt~$-K$.
The Minkowski concentricities turn out to be necessary conditions for the equality cases of many of the relevant inequalities and inequality chains in this paper.

The Minkowski asymmetry fulfills $1 \le s(K) \le n$ with equality on the
left side if and only if $K=_t-K$ and equality on the right side if and only if $K$ is an $n$-simplex
(see \cite{minkowski1897allgemeine}, or \cite{BrKo2} for a proof given in english).

In \cite{Gr} it is shown that $c$ is a Minkowski center of $K$ if and only if for all $s \in \R^n\setminus\set{0}$
it holds
\begin{equation}\label{defAsymmetry}
\frac{h(-c-K,-s)}{h(-c-K,s)}\in[s(K)^{-1},s(K)] .
\end{equation}
The geometrical meaning of \eqref{defAsymmetry} is that any hyperplane with normal $s$
containing the Minkowski center $c$ of $K$, splits $K$ in two parts with an $s$-breadth ratio bounded from above by $s(K)$.
In this sense, the Minkowski center with respect to the directional breadth
plays the role that the the centroid plays with respect to the volume (\cf~\cite{Grunb}).

A set $K$ is of \cemph{dred}{constant width} \wrt~$C$, if $D(K,C) = b_s(K,C)$
independently of the choice of $s\in\R^n\setminus\set{0}$.
It is well known that $K$ is of constant width
if and only if $K-K =D(K,C)/2(C-C)$ (see, \eg,\cite[(A)]{Egg}).

The following is a trivial but important observation: while some symmetric $C$
 do not admit non-trivial constant width sets (like parallelotopes or if they are indecomposable,
 like crosspolytopes in dimension 3 or greater),
 the \cemph{dred}{difference body} $C-C$ is
\emph{always} a non-trivial constant width body \wrt~$C$, if $C$ is non-symmetric.

It is well known that if $K$ is of constant width \wrt~$C$ then
it is complete \wrt~$C$. However, even though this two properties are equivalent in the Euclidean space and in any planar Minkowski
space, they are no longer equivalent for general (symmetric) $C$ if the dimension of the space is greater than 2 (see \cite{Egg}).


It is shown in \cite[Lemma 3.2 (ii)]{BrGoJaMa} that we may symmetrize the gauge body $C$:

\begin{prop}\label{CcompleteC-C}
Let $K,C\in\CK^n$. Then the following are equivalent:
\begin{enumerate}[(i)]
\item $K$ is complete \wrt~$C$ and
\item $K$ is complete \wrt~$C-C$.
\end{enumerate}
\end{prop}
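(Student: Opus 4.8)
The statement to prove is Proposition~\ref{CcompleteC-C}: for $K,C\in\CK^n$, $K$ is complete \wrt~$C$ if and only if $K$ is complete \wrt~$C-C$. The plan is to exploit the fact that the diameter functional $D(\cdot,C)$ depends on $C$ only through its central symmetrization $C-C$. Indeed, the excerpt records that $D(K,C)=\max_{x,y\in K}\norm[y-x]_{1/2(C-C)}$, and equivalently $D(K,C)=\sup_{s\in\R^n\setminus\set{0}}b_s(K,C)$ where the $s$-breadth satisfies $b_s(K,C)=b_s(K,C-C)$ (up to the stated normalization). The first step I would carry out is to make this identity precise: I would show that $D(K,C)$ and $D(K,C-C)$ agree up to a fixed positive multiplicative constant independent of $K$, arising solely from the relation between $\tfrac12(C-C)$ and $(C-C)-(C-C)=2(C-C)$. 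Concretely, since the gauge body entering the diameter of $K$ \wrt~$C$ is $\tfrac12(C-C)$, while the one entering the diameter \wrt~$C-C$ is $\tfrac12((C-C)-(C-C))=C-C$, one gets $D(K,C-C)=\tfrac12 D(K,C)$ for every $K$.

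With this proportionality in hand, the proof becomes essentially formal. The key observation is that completeness of $K$ \wrt~$C$ is defined purely in terms of comparing $D(L,C)$ to $D(K,C)$ for supersets $L\supsetneq K$: namely $K$ is complete \wrt~$C$ precisely when every $L\supsetneq K$ has $D(L,C)>D(K,C)$. The second step is therefore to note that multiplying a functional by a positive constant preserves all strict inequalities between its values. That is, $D(L,C)>D(K,C)$ holds for all supersets $L\supsetneq K$ if and only if $D(L,C-C)>D(K,C-C)$ holds for all such $L$, because $D(\cdot,C-C)=\tfrac12 D(\cdot,C)$ pointwise. This immediately yields the desired equivalence of the two completeness notions.

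The main (and really the only) obstacle is the careful bookkeeping of the constant in the first step, since one must keep track of the distinction between the gauge body $\tfrac12(C-C)$ used to measure lengths \wrt~$C$ and the gauge body used to measure lengths \wrt~$C-C$, and verify that the difference body of $C-C$ is $2(C-C)$. I would handle this most cleanly via the $s$-breadth formulation: using $D(K,C)=\sup_s b_s(K,C)$ together with the displayed identity $b_s(K-K,C-C)=b_s(K-K,C)$ and the homogeneity of the $s$-breadth in its second argument, the relation between $D(K,C)$ and $D(K,C-C)$ follows directly from $b_s(K,C-C)=\tfrac12 b_s(K,C)$ for each direction $s$. Alternatively, I would simply cite \cite[Lemma 3.2 (ii)]{BrGoJaMa} as the excerpt does, but the self-contained argument above via the proportionality of diameters is short enough to record in full. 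Once the proportionality is fixed, both implications \textnormal{(i)}$\Rightarrow$\textnormal{(ii)} and \textnormal{(ii)}$\Rightarrow$\textnormal{(i)} are symmetric and immediate.
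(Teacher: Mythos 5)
Your proposal is correct, but it takes a genuinely different route from the paper: the paper gives no internal proof of Proposition \ref{CcompleteC-C} at all, it simply cites \cite[Lemma 3.2 (ii)]{BrGoJaMa}, whereas you supply a short self-contained argument. The heart of your proof, the proportionality $D(\cdot,C-C)=\tfrac12 D(\cdot,C)$ as functionals on sets, is exactly what the breadth identities recorded in Section \ref{s:notation} give: $b_s(K,C)=2\,b_s(K,C-C)$ for every direction $s$, and taking the supremum over $s$ yields the claim (equivalently, $(C-C)-(C-C)=2(C-C)$, so lengths measured with respect to $C-C$ are half of those measured with respect to $C$). Since completeness of $K$ with respect to a gauge body is formulated purely in terms of strict inequalities between values of the diameter functional on strict supersets of $K$, and the class of competing supersets does not depend on the gauge body, a positive rescaling of that functional preserves completeness; both implications then follow simultaneously, as you say. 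Your bookkeeping of the constant is also right, and the argument is robust to whether the supersets range over convex bodies or arbitrary sets, because the proportionality already holds pointwise for pairs of points (the length of $[x,y]$ with respect to $C-C$ is half its length with respect to $C$). What the paper's citation buys is brevity and deference to a reference that treats completeness and completions more systematically; what your argument buys is that the proposition becomes a two-line consequence of facts the paper has already stated, making this step self-contained.
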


One should recognize that $K$ is complete \wrt~$C$ does not imply that $K-K$ is complete \wrt~$C$.
Otherwise Proposition \ref{CcompleteC-C} would imply that $K-K$ is complete \wrt~$C-C$, which is only possible if
$K-K =\rho(C-C)$ for some $\rho\in\R$. However, the latter would imply that $K$ is of constant width \wrt~$C$,
but as mentioned above, completeness does not always imply constant width.

\section{New inequalities}

The following lemma collects a series of easy to obtain inequalities,
which are used in the proofs of the main theorems below.

\begin{lem}\label{lem:Rr-sn} \label{RrC-C}
  Let $K,C\in\CK^n$. Then
  \begin{enumerate}[(a)]
  \item \label{eq:Rr-sn(i)} $\max\set{s(K),s(C)} \le \frac{R(K,C)}{r(K,-C)}$
    and $s(C)= \frac{R(K,C)}{r(K,-C)}$ implies that $K$ is mirrored
    Minkowski concentric \wrt~$C$ while
    $s(K)= \frac{R(K,C)}{r(K,-C)}$ implies that $C$ is mirrored
    Minkowski concentric \wrt~$K$.
  \item \label{(i)RrC-C} $\frac{s(C)+1}{s(C)} \le \frac{R(K,C)}{R(K,C-C)} \le s(C)+1$.
  \item \label{(ii)RrC-C} $\frac{s(C)+1}{s(C)} \le \frac{r(K,C)}{r(K,C-C)} \le s(C)+1$.
  \item \label{eq:Rr-sn(ii)}
    $\min\set{s(K),s(C)} \ge \max\set{\frac{r(K,-C)}{r(K,C)},\frac{R(K,-C)}{R(K,C)}}$.
  \item \label{RrC-C_iii} $s(C) \ge \frac{R(K,C)r(K,C-C)}{r(K,C)R(K,C-C)}$.
  \end{enumerate}
\end{lem}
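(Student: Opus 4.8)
The plan is to treat the five estimates as consequences of two elementary containment chains together with the monotonicity, homogeneity and translation behaviour of the radii recalled in Section~\ref{s:notation}. The first workhorse is the mirrored inradius--circumradius sandwich
\[
  r(K,-C)(-C) \subset_t K \subset_t R(K,C)\,C,
\]
which is just the definition of the two radii. The second is the difference-body sandwich: assuming, without loss of generality by translation invariance of the radii in the second argument, that $C$ is Minkowski centered, the relations $-C \subset s(C)C$ and $C \subset s(C)(-C)$ give $\tfrac{1}{s(C)}C \subset -C \subset s(C)C$, whence, using $A+\mu A=(1+\mu)A$ for convex $A$,
\[
  \tfrac{s(C)+1}{s(C)}\,C \subset C-C \subset (s(C)+1)\,C .
\]
Throughout I will also use the identities $R(K,-C)=R(-K,C)$ and $r(K,-C)=r(-K,C)$, both obtained by negating the defining containments.

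For parts \eqref{(i)RrC-C} and \eqref{(ii)RrC-C} I would feed the difference-body sandwich into $R(K,\cdot)$ and $r(K,\cdot)$. Since both radii are non-increasing and homogeneous of degree $-1$ in the second argument, the inclusion $C-C\subset(s(C)+1)C$ yields $R(K,C-C)\ge\frac{1}{s(C)+1}R(K,C)$ and likewise for $r$, while $\frac{s(C)+1}{s(C)}C\subset C-C$ yields the opposite bounds; rearranging gives both chains at once. Part \eqref{RrC-C_iii} then follows for free by multiplying the upper bound from \eqref{(i)RrC-C}, namely $\frac{R(K,C)}{R(K,C-C)}\le s(C)+1$, with the reciprocal of the lower bound from \eqref{(ii)RrC-C}, namely $\frac{r(K,C-C)}{r(K,C)}\le\frac{s(C)}{s(C)+1}$, since $(s(C)+1)\cdot\frac{s(C)}{s(C)+1}=s(C)$.

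For part \eqref{eq:Rr-sn(ii)} I would prove the four separate estimates $s(C),s(K)\ge\frac{r(K,-C)}{r(K,C)}$ and $s(C),s(K)\ge\frac{R(K,-C)}{R(K,C)}$. The bounds involving $s(C)$ come from $C\subset_t s(C)(-C)$: composing with $K\subset_t R(K,C)C$ gives $R(K,-C)\le s(C)R(K,C)$, and applying monotonicity of $r$ in the second argument to $C\subset_t s(C)(-C)$ gives $r(K,-C)\le s(C)r(K,C)$. The bounds involving $s(K)$ come from $-K\subset_t s(K)K$ via the identities above: $R(K,-C)=R(-K,C)\le R(s(K)K,C)=s(K)R(K,C)$ and $r(K,-C)=r(-K,C)\le r(s(K)K,C)=s(K)r(K,C)$, using monotonicity and homogeneity in the first argument.

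The genuinely delicate part is \eqref{eq:Rr-sn(i)}. The two inequalities themselves are quick: from the mirrored sandwich, $r(K,-C)(-C)\subset_t R(K,C)C$ gives $-C\subset_t\frac{R(K,C)}{r(K,-C)}C$, so $s(C)\le\frac{R(K,C)}{r(K,-C)}$; negating the sandwich and composing as in the proof of \eqref{eq:Rr-sn(ii)} gives $-K\subset_t\frac{R(K,C)}{r(K,-C)}K$, so $s(K)\le\frac{R(K,C)}{r(K,-C)}$. The obstacle is the equality analysis. Writing the sandwich with explicit translations as $r(K,-C)(-C)+a\subset K\subset R(K,C)C+b$ and setting $\alpha=r(K,-C)$, $\beta=R(K,C)$, the equality $s(C)=\beta/\alpha$ forces the derived inclusion $-C\subset\frac{\beta}{\alpha}C+\frac{b-a}{\alpha}$ to be $s(C)$-optimal; I would then show that any translation $v$ realizing $-C\subset s(C)C+v$ identifies a Minkowski center $c=-v/(s(C)+1)$ of $C$, and check that the single choice $t=a-\alpha c=b+\beta c$, whose consistency is exactly the relation $c=\frac{a-b}{\alpha+\beta}$ produced by optimality, witnesses the mirrored concentric containment $-\alpha(C-c)\subset K-t\subset\beta(C-c)$. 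The equality $s(K)=\beta/\alpha$ is handled symmetrically: the same bookkeeping applied to $-K\subset_t\frac{\beta}{\alpha}K$ produces a Minkowski center of $K$ and the corresponding mirrored concentric containment of $C$ with respect to $K$. The main care needed is precisely this matching of translation vectors, which is where I expect the only real work to lie.
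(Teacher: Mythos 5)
Your proposal is correct, and for parts (a), (b), (c) and (e) it runs along the same lines as the paper's own proof: the mirrored sandwich $r(K,-C)(-C)\subset_t K\subset_t R(K,C)C$ yields both asymmetry bounds in (a), the symmetrized sandwich $\tfrac{s(C)+1}{s(C)}C\subset C-C\subset(s(C)+1)C$ for Minkowski centered $C$ yields (b) and (c) (your version states the two factors correctly, whereas the paper's second inclusion chain has the factor $\tfrac{s(C)+1}{s(C)}$ where $\tfrac{s(C)}{s(C)+1}$ is meant -- an evident typo), and (e) is exactly the same division of the upper bound in (b) by the lower bound in (c). Two genuine local differences are worth noting. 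First, for (d) the paper does not argue directly: it reduces the $s(K)$-bounds to the $s(C)$-bounds via the duality $r(K,C)=R(C,K)^{-1}$, and then gets $s(C)\ge R(K,-C)/R(K,C)$ by dividing the two inequalities of (b) (applied to $C$ and to $-C$), similarly with (c) for the inradius ratio; your four direct estimates from $-C\subset_t s(C)C$, $-K\subset_t s(K)K$ together with monotonicity and homogeneity of the radii are more elementary and bypass (b)/(c) entirely. Second, your equality analysis in (a) is an explicit version of what the paper leaves implicit: the paper normalizes $K$ to be Minkowski centered, asserts one ``immediately sees'' a chain $-K\subset^{\opt}t+R(K,C)(-C)\subset^{\opt}s(K)K$, and concludes; the content of that ``immediately'' is precisely the translation-matching you carry out, namely that the optimal factor identifies the Minkowski center $c=\tfrac{a-b}{\alpha+\beta}$ and that the single witness $t=a-\alpha c=b+\beta c$ is consistent. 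One caveat, which affects your sketch and the paper's proof equally: in the $s(K)$-equality case the bookkeeping actually sandwiches $C$ between a positive dilate of $K-c$ inside and a negative dilate outside, which by the paper's definition is literally the mirrored Minkowski concentricity of $C$ with respect to $-K$ rather than $K$; since the paper's own chain has exactly the same feature, this is a shared imprecision and not a gap in your argument relative to the paper.
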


\begin{proof}
  \begin{enumerate}[(a)]
  \item As a direct consequence of the definitions of the inradius and the circumradius
    we obtain
    \[\frac{r(K,-C)}{R(K,C)} (-K) \subset_t r(K,-C)(-C) \subset_t K \subset_t R(K,C) C.\]
    This implies for the Minkowski asymmetries that $s(K)=R(-K,K) \le R(K,C)/r(K,-C)$
    as well as $s(C) =R(-C,C) \le  R(K,C)/r(K,-C)$. Now, let us assume that
    $s(K)= R(K,C)/r(K,-C)$ and without loss of generality that $K$ is Minkowski centered.
    Then we immediately see
    that there exists a translation $t \in \R^n$ such that
    $-K \subset^{\opt} t+R(K,C)(-C) \subset^{\opt} s(K)K$. Hence $C$ is mirrored Minkowski concentric \wrt~$K$.
    The remaining part of the claim follows analogously.
  \item From the definitions of circumradius and Minkowski asymmetry
    it follows \[K \subset_t R(K,C-C)(C-C) \subset_t R(K,C-C)(s(C)+1)C\]
    as well as \[K \subset_t R(K,C) C \subset_t R(K,C) \frac{s(C)+1}{s(C)} (C-C).\]
    While the first chain of inclusions implies $R(K,C) \le (s(C)+1)R(K,C-C)$ the other implies
    $R(K,C-C) \le R(K,C) \frac{s(C)+1}{s(C)}$.
  \item This can be shown completely analogous to Part \eqref{(i)RrC-C} replacing the circumradii by the inradii.
  \item Since $r(K,C) = R(C,K)^{-1}$ it suffices to show $s(C) \ge \max\set{\frac{r(K,-C)}{r(K,C)},\frac{R(K,-C)}{R(K,C)}}$.
    However, from Part \eqref{(i)RrC-C} we know $\frac{R(K,-C)}{R(K,C-C)} \le s(C)+1$ and
    $\frac{R(K,C)}{R(K,C-C)} \ge \frac{s(C)+1}{s(C)}$. Dividing the second by the first inequality we obtain
    $\frac{R(K,-C)}{R(K,C)} \le s(C)$. The part of statement involving the inradius-ratio follows analogously applying both
    inequalities in Part \eqref{(ii)RrC-C}.
  \item The last inequality follows directly from dividing the right inequality in Part \eqref{(i)RrC-C}
    by the left inequality in \eqref{(ii)RrC-C}.
  \end{enumerate}
\end{proof}

\begin{lem}\label{lem:breadths}
Let $C\in\CK^n$ be Minkowski centered and $r\in[0,1]$. Then
\begin{equation*}
\frac{h(C,a)+h(rC,-a)}{h(C,a)+h(C,-a)}\in\left[\frac{1+s(C)r}{1+s(C)},\frac{r+s(C)}{1+s(C)}\right], \,a\in\R^n\setminus\set{0}.
\end{equation*}
\end{lem}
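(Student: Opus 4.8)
The plan is to reduce the two-sided estimate to a one-dimensional monotonicity statement. First I would rewrite the hypothesis that $C$ is Minkowski centered in terms of support functions. Applying the characterization \eqref{defAsymmetry} with $c=0$ and $K=C$, and using the elementary identity $h(-C,x)=h(C,-x)$ (so that $h(-C,-a)=h(C,a)$ and $h(-C,a)=h(C,-a)$), the Minkowski centeredness of $C$ becomes equivalent to
\[
  \frac{h(C,a)}{h(C,-a)} \in \left[\frac{1}{s(C)},\, s(C)\right] \quad\text{for all}\quad a \in \R^n\setminus\set{0}.
\]
In particular this ratio is bounded away from $0$ and $\infty$, which forces both $h(C,a)$ and $h(C,-a)$ to be strictly positive (equivalently, $0$ lies in the interior of $C$), so every denominator appearing below is nonzero.

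Next I would exploit the positive homogeneity of the support function in its first argument, namely $h(rC,-a)=r\,h(C,-a)$, valid since $r\ge 0$, to bring the quotient in question into a single-variable form. Writing $t := h(C,a)/h(C,-a)$ and dividing numerator and denominator by $h(C,-a)>0$, the quantity to be bounded becomes
\[
  \frac{h(C,a)+h(rC,-a)}{h(C,a)+h(C,-a)} = \frac{t+r}{t+1} =: g(t),
\]
where, by the first step, $t$ ranges precisely over the interval $[1/s(C),\, s(C)]$.

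Finally I would settle the monotonicity of $g$. A direct computation gives $g'(t) = (1-r)/(t+1)^2 \ge 0$, since $r\le 1$, so $g$ is non-decreasing on $[1/s(C),\, s(C)]$ and therefore attains its extreme values at the endpoints of this interval. Evaluating $g(1/s(C)) = (1+s(C)r)/(1+s(C))$ and $g(s(C)) = (r+s(C))/(1+s(C))$ then yields exactly the lower and upper bounds of the claimed interval. The argument is essentially routine once set up; the only step that needs genuine care is the reformulation in the first paragraph — checking that the Minkowski center condition \eqref{defAsymmetry} indeed translates, via $h(-C,\cdot)=h(C,-\cdot)$, into the clean two-sided bound on $h(C,a)/h(C,-a)$, and that this bound guarantees the strict positivity that makes the division legitimate.
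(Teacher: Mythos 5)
Your proof is correct and is essentially the paper's own argument: both reduce the quotient, via the Minkowski-center characterization \eqref{defAsymmetry}, to a one-variable M\"obius function over the interval $[s(C)^{-1},s(C)]$ and evaluate at the endpoints using monotonicity (the paper sets $x=h(C,-a)/h(C,a)$ and uses the decreasing function $(1+rx)/(1+x)$, which is just your $g$ with the variable inverted). Your extra care about positivity of the support values is a nicety the paper leaves implicit, but it changes nothing substantive.
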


\begin{proof}
Defining $x:=\frac{h(C,-a)}{h(C,a)}$ we know from \eqref{defAsymmetry} that $x \in [s(C)^{-1},s(C)]$. Using this $x$ we may
rewrite the fraction $\frac{h(C,a)+h(rC,-a)}{h(C,a)+h(C,-a)}$ dividing both enumerator and denominator by $h(C,a)$ as
$f(x):=\frac{1+rx}{1+x}$. Since $f(x)$ is a decreasing function for $r \le 1$, we conclude that
$f(x)\in[f(s(C)),f(s(C)^{-1})]$, proving the assertion.
\end{proof}

\bigskip

\begin{proof}[Proof of Theorem \ref{th:sr+R<=s+12D}]
The left inequality of both chains (since equal) as well as the middle inequality of \eqref{eq:(1+s)/sR<=s+12D}
  follow directly from Lemma \ref{lem:Rr-sn} \eqref{eq:Rr-sn(i)},
  the middle inequality of \eqref{eq:sr+R<=s+12D} directly from Lemma \ref{lem:Rr-sn} \eqref{eq:Rr-sn(ii)}.
  Moreover, the right inequality of \eqref{eq:(1+s)/sR<=s+12D} follows from \eqref{eq:ExtBohnenblust}.
Thus it only remains to show the right inequality in \eqref{eq:sr+R<=s+12D}, the generalized concentricity
inequality \eqref{eq:pp-concentricity}.
For the proof we may assume \Wlog~that $K\subset^{\opt} C \subset -s(C)C$, \ie $R(K)=1$ and $C$ is Minkowski centered.
Moreover, let $c \in C$ be such that $c+r(K,C)C \subset K$.
Since $K\subset^{\opt}C$ by Proposition \ref{prop:occ} there exist
$p^j\in \bd(K)\cap\bd(C)$, $j\in[m]$, as well as $a^j\in N(C,p^j) \setminus \set{0}$
and $\lambda_j > 0$, $j \in [m]$ such that $\sum_{j=1}^m\lambda_j=1$ and $\sum_{j=1}^m\lambda_ja^j=0$.
The latter implies the existence of some $j \in[m]$ such that $c^Ta^j \le 0$, which we may assume to be $1$.

Abbreviating $r:=r(K,C)$, we obtain from $K':=\conv(\set{p^1} \cup (c+rC)) \subset K$ and $h(K',a^1)=(p^1)^Ta^1=h(C,a^1)$ that
\begin{align*}
\frac 12 D(K,C) & \ge \frac 12 b_{a^1}(K,C) = \frac{h(K,a^1)+h(K,-a^1)}{h(C,a^1)+h(C,-a^1)} \\
& \ge \frac{h(K',a^1) + h(K',-a^1)}{h(C,a^1)+h(C,-a^1)}
 = \frac{h(C,a^1)+h(rC,-a^1)-c^Ta^1}{h(C,a^1)+h(C,-a^1)}\\
\intertext{and using the assumption $c^Ta^1 \le 0$ and Lemma \ref{lem:breadths} this is}
& \ge  \frac{h(C,a^1)+h(rC,-a^1)}{h(C,a^1)+h(C,-a^1)}
 \ge \frac{1+rs(C)}{1+s(C)}.
\end{align*}
\end{proof}

Let us keep two facts, we immediately obtain from the above proof:
\begin{enumerate}[a)]
\item \label{eq:pp-c-equal} equality in the generalized concentricity inequality \eqref{eq:pp-concentricity} implies
  that $K$ is Minkowski concentric \wrt~$C$.
\item \label{eq:pm-c-equal} equality in the mirrored concentricity inequality \eqref{eq:pm-concentricity} implies
  that $s(C) \le s(K)$ and that $C$ is mirrored Minkowski concentric \wrt~$K$ (in addition to the Minkowski
    concentricity of $K$ and $C$, which one gets from the induced equality in the generalized concentricity inequality).
\end{enumerate}
Part \eqref{eq:pm-c-equal} directly follows from the equality case $s(K)= \frac{R(K,C)}{r(K,-C)}$ in
Lemma \ref{lem:Rr-sn} \eqref{eq:Rr-sn(i)}. To see that Part \eqref{eq:pp-c-equal} holds,
let us assume without loss of generality
that $C$ is Minkowski centered, that $r(K,C) C \subset K$ and $R(K,C)=1$.
Our aim is to show that $K \subset C$, thus proving the Minkowski concentricity of $K$ \wrt~$C$.
Let us observe the following: Lemma \ref{lem:breadths} shows that if $p^1 \in \bd(C)$ then
\[\frac 12 D(\conv(\set{p^1} \cup r(K,C)C),C) \ge \frac{s(C)r(K,C)+1}{s(C)+1}.\]
Hence for any $\rho >1$ and any $y \in \bd(\rho C)$ it holds
\[\frac 12 D(\conv(\set{y} \cup r(K,C) C),C) \ge \frac{s(C)r(K,C)+\rho}{s(C)+1} > \frac{s(C)r(K,C)+1}{s(C)+1}.\]
Thus equality in \eqref{eq:pp-concentricity} implies that no such $y$ exists in $K$ and therefore $K \subset C$.

\medskip

A classical result in Euclidean geometry on simplices says that the circumradius-inradius ratio of an $n$-simplex $S$,
  is at least $n$, \ie, $R(S,\B_2^n)\geq nr(S,\B_2^n)$ (\cf~\cite[p.~28]{toth2013Lagerungen} -- with equality if and only if
  $S$ is regular). This was extended in \cite[Theorem 6.1]{BrKo2} saying the following:

  \begin{prop} \label{prop:FT} Let $K,C\in\CK^n$. Then
    \begin{equation*}
      \frac{R(K,C)}{r(K,C)} \ge \max\set{\frac{s(K)}{s(C)},{\frac{s(C)}{s(K)}}}.
    \end{equation*}
  \end{prop}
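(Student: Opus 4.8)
The plan is to exploit a symmetry of the left-hand side. Since $r(K,C)=R(C,K)^{-1}$ (and hence also $R(K,C)=r(C,K)^{-1}$), the ratio satisfies
\[ \frac{R(K,C)}{r(K,C)} = \frac{r(C,K)^{-1}}{R(C,K)^{-1}} = \frac{R(C,K)}{r(C,K)}, \]
so it is invariant under interchanging $K$ and $C$. The right-hand side $\max\set{s(K)/s(C),s(C)/s(K)}$ is obviously invariant under the same interchange. Hence it suffices to establish the single inequality $R(K,C)/r(K,C) \ge s(C)/s(K)$: applying it to the pair $(C,K)$ then yields $R(K,C)/r(K,C) = R(C,K)/r(C,K) \ge s(K)/s(C)$, and together these two give the claimed maximum. (We may assume all quantities are finite and positive, i.e. $K$ and $C$ full-dimensional, since otherwise the asserted bound is trivial.)

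To prove $R(K,C)\,s(K) \ge s(C)\,r(K,C)$, abbreviate $r:=r(K,C)$ and $R:=R(K,C)$ and choose translates realizing the defining inclusions, i.e.\ pick $c_1,c_2,c_3\in\R^n$ with $c_1+rC\subset K$, $K\subset c_2+RC$, and $-K\subset c_3+s(K)K$ (the last one because $s(K)=R(-K,K)$). Negating the first inclusion gives $-rC\subset_t -K$, and scaling the second by $s(K)$ gives $s(K)K\subset_t s(K)RC$. Chaining these through the third inclusion I obtain
\[ -rC \subset_t -K \subset_t s(K)K \subset_t s(K)R\,C, \]
so $-rC\subset_t s(K)R\,C$, i.e.\ $-C\subset_t \tfrac{s(K)R}{r}\,C$. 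By definition of the Minkowski asymmetry this means $s(C)=R(-C,C)\le \tfrac{s(K)R}{r}$, which is exactly $R/r\ge s(C)/s(K)$.

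There is essentially no hard step here: the argument is a single chain of containments, and the only point requiring care is the bookkeeping of the translation vectors. This is harmless precisely because every relation used is a relation up to translation ($\subset_t$) and because $s$, $R$ and $r$ are translation invariant in the relevant argument, so transitivity, positive scaling and negation of $\subset_t$ apply directly. Should one prefer to avoid invoking the swap symmetry, the companion inequality $R/r\ge s(K)/s(C)$ is obtained by the mirror-image chain $-K \subset_t R(-C) \subset_t R\,s(C)\,C \subset_t \tfrac{R\,s(C)}{r}\,K$, reading off $s(K)=R(-K,K)\le \tfrac{R\,s(C)}{r}$ from the outermost inclusion.
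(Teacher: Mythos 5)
Your proof is correct, and it takes a genuinely different route from the paper. The paper obtains the proposition by dividing two inequalities it has already established in Lemma \ref{lem:Rr-sn}: part \eqref{eq:Rr-sn(i)}, $\max\set{s(K),s(C)}\le R(K,C)/r(K,-C)$, and part \eqref{eq:Rr-sn(ii)}, $\min\set{s(K),s(C)}\ge \max\set{r(K,-C)/r(K,C),\,R(K,-C)/R(K,C)}$ --- so its argument factors through the \emph{mirrored} radii $r(K,-C)$, $R(K,-C)$, and (inside the proof of part \eqref{eq:Rr-sn(ii)}) through the difference body $C-C$ via parts \eqref{(i)RrC-C} and \eqref{(ii)RrC-C}. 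You avoid all of these intermediate quantities: your argument is a single three-step containment chain $-r C \subset_t -K \subset_t s(K)K \subset_t s(K)R\,C$ read off against the definition $s(C)=R(-C,C)$, combined with the swap symmetry $R(K,C)/r(K,C)=R(C,K)/r(C,K)$ (itself immediate from $r(K,C)=R(C,K)^{-1}$), or alternatively your mirror chain, which makes the proof entirely self-contained from the definitions. What your route buys is economy and independence from the lemma; what the paper's route buys is the equality-case information that it exploits immediately after stating the proposition: since equality in Lemma \ref{lem:Rr-sn} \eqref{eq:Rr-sn(i)} forces mirrored Minkowski concentricity, the paper's derivation shows for free that $R(K,C)/r(K,C)=s(K)/s(C)$ implies $s(K)\ge s(C)$ and that $C$ is mirrored Minkowski concentric \wrt~$K$ (and vice versa). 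Your chains could be made to yield the same conclusions (equality forces optimality of every inclusion in the chain), but as written you do not extract them. One small point of care: your reduction to the full-dimensional case should note that $r(K,C)>0$ together with $R(K,C)<\infty$ forces the affine hulls of $K$ and $C$ to be translates of one another, after which one works inside that common affine hull; this is routine and does not affect correctness.
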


  Recognizing that Proposition \ref{prop:FT} can simply be obtained from dividing inequalities from
    Lemma \ref{lem:Rr-sn} \eqref{eq:Rr-sn(i)} and \eqref{eq:Rr-sn(ii)}, we easily see that  
  $\frac{R(K,C)}{r(K,C)} = \frac{s(K)}{s(C)}$ implies that $s(K) \ge s(C)$ as well as $C$ is
  mirrored Minkowski concentric \wrt~$K$ and vice versa that $\frac{R(K,C)}{r(K,C)} = \frac{s(C)}{s(K)}$
  implies that $s(C) \ge s(K)$ as well as $K$ is mirrored Minkowski concentric \wrt~$C$.

  Surely, in general no constant exists bounding the circumradius-inradius ratio from above.
  However, the family of complete bodies allows such an upper bound:

  \begin{lem}\label{thm:rFT}
    Let $K,C\in\CK^n$. If $K$ is complete \wrt~$C$ then
    \begin{equation*} 
      \frac{R(K,C)}{r(K,C)} \le s(K)s(C)
    \end{equation*}
    and equality implies that $K$ and $C$ are Minkowski concentric.
\end{lem}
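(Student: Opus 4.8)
The plan is to reduce everything to the symmetric gauge $C-C$ and then transfer back to $C$ via the breadth-ratio bounds of Lemma \ref{lem:Rr-sn}. First I would invoke Proposition \ref{CcompleteC-C} to replace $C$ by the symmetric body $C-C$: since $K$ is complete \wrt~$C$ it is also complete \wrt~$C-C$. As $C-C$ is symmetric, the chain \eqref{eq:chainMinkowski} then holds with equality for $K$ (\cf~\cite{BrGo2}); reading off the equality $r(K,C-C)+R(K,C-C)=\tfrac{1+s(K)}{s(K)}R(K,C-C)$ gives $R(K,C-C)=s(K)\,r(K,C-C)$, i.e.\ the circumradius--inradius ratio \wrt~the symmetrized gauge is \emph{exactly} $s(K)$.

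The inequality itself then follows by sandwiching. Combining the upper bound of Lemma \ref{lem:Rr-sn} \eqref{(i)RrC-C}, the identity just obtained, and the lower bound of Lemma \ref{lem:Rr-sn} \eqref{(ii)RrC-C} (rewritten as $r(K,C-C)\le\tfrac{s(C)}{s(C)+1}r(K,C)$), I would write
\begin{equation*}
  R(K,C) \le (s(C)+1)R(K,C-C) = (s(C)+1)s(K)\,r(K,C-C) \le (s(C)+1)s(K)\,\tfrac{s(C)}{s(C)+1}\,r(K,C) = s(K)s(C)\,r(K,C),
\end{equation*}
and dividing by $r(K,C)$ yields $R(K,C)/r(K,C)\le s(K)s(C)$.

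For the equality statement, note that equality in the displayed chain forces equality simultaneously in the upper bound of Lemma \ref{lem:Rr-sn} \eqref{(i)RrC-C} (so $R(K,C)=(s(C)+1)R(K,C-C)$) and in the lower bound of Lemma \ref{lem:Rr-sn} \eqref{(ii)RrC-C} (so $r(K,C)=\tfrac{s(C)+1}{s(C)}r(K,C-C)$). The plan is to extract concentricity from these two tightnesses. Completeness \wrt~the symmetric gauge $C-C$ already yields (via the equality case of \eqref{eq:chainMinkowski}, \cf~the remark that every incenter is a circumcenter in \cite[Lemma 2.4]{BrGo}) a Minkowski center $z$ of $K$ with $z+r(K,C-C)(C-C)\subset K\subset z+R(K,C-C)(C-C)$, i.e.\ $K$ is Minkowski concentric \wrt~$C-C$. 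Assuming $C$ Minkowski centered, the tight inclusions $\tfrac{s(C)+1}{s(C)}C\subset_t C-C\subset(s(C)+1)C$ underlying parts \eqref{(i)RrC-C} and \eqref{(ii)RrC-C} should then force the inscribed and circumscribed $C$-balls of $K$ to be exactly the nested balls these inclusions produce; since the $(C-C)$-balls are concentric at $z$ and $C$ sits at its Minkowski center inside $C-C$, the in- and circum-$C$-balls of $K$ become concentric at $z$, which is the asserted Minkowski concentricity of $K$ and $C$.

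I expect the equality case to be the main obstacle: the inequality is essentially bookkeeping with Lemma \ref{lem:Rr-sn}, but turning the two tightness conditions into a genuine common Minkowski center requires care, because $C$ is non-symmetric and the containments in Lemma \ref{lem:Rr-sn} \eqref{(i)RrC-C}, \eqref{(ii)RrC-C} hold only up to translation. Concretely, I would have to verify that the translations realizing $\tfrac{s(C)+1}{s(C)}C\subset_t C-C$ and $C-C\subset_t(s(C)+1)C$ can be chosen compatibly with the center $z$, mirroring the equality analysis already carried out in the proof of Lemma \ref{lem:Rr-sn} \eqref{eq:Rr-sn(i)} and in the remark following Proposition \ref{prop:FT}.
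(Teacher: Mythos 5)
Your proposal is correct and follows essentially the same route as the paper: symmetrize via Proposition \ref{CcompleteC-C}, use that completeness \wrt~the symmetric body $C-C$ forces $R(K,C-C)=s(K)\,r(K,C-C)$, and transfer back to $C$ through Lemma \ref{lem:Rr-sn}. (The paper invokes part \eqref{RrC-C_iii} for the inequality, but that part is exactly the quotient of the two bounds \eqref{(i)RrC-C} and \eqref{(ii)RrC-C} you combined, so your computation is identical.)

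The one step you left open --- compatibility of the translations with the center $z$ in the equality case --- is not an actual obstacle, and it is worth seeing why: once $C$ is assumed Minkowski centered at the origin, the inclusions underlying \eqref{(i)RrC-C} and \eqref{(ii)RrC-C} hold \emph{without} any translation. Indeed, $-C\subset s(C)C$ gives $C-C\subset(1+s(C))C$, and $\frac{1}{s(C)}C\subset -C$ gives $\frac{s(C)+1}{s(C)}C\subset C-C$, both as inclusions of sets positioned at $0$. Hence, with $z$ the Minkowski center of $K$ coming from completeness \wrt~$C-C$, your two tightness conditions $R(K,C)=(s(C)+1)R(K,C-C)$ and $r(K,C)=\frac{s(C)+1}{s(C)}r(K,C-C)$ yield the genuine chain
\begin{equation*}
  \begin{split}
    r(K,C)C=\tfrac{s(C)+1}{s(C)}r(K,C-C)C &\subset r(K,C-C)(C-C)\subset K-z \\
    &\subset R(K,C-C)(C-C)\subset(1+s(C))R(K,C-C)C=R(K,C)C,
  \end{split}
\end{equation*}
which is precisely the Minkowski concentricity of $K$ and $C$ (inner and outer $C$-balls of $K-z$ both sitting at $0$, with $0$ a Minkowski center of $C$ and $z$ one of $K$). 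This one-line observation is exactly how the paper closes the equality case.
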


\begin{proof}
Let $K$ be complete \wrt~$C$. Then $K$ is complete \wrt~$C-C$ by Proposition \ref{CcompleteC-C} and therefore
fulfills $R(K,C-C)/r(K,C-C) = s(K)$ by \cite[Theorem 1.2]{BrGo}.
Now the claimed inequality follows directly using Lemma \ref{RrC-C} \eqref{RrC-C_iii}.
For the concentricity statement let us assume without loss of generality that $C$ is Minkowski centered.
  It is shown in \cite{BrGo} that $K$ is complete \wrt~$C-C$ implies that
  $r(K,C-C)(C-C) \subset K-c \subset R(K,C-C)(C-C)$ for some Minkowski center $c$ of $K$, thus
  $K$ and $C-C$ are Minkowski concentric.
  However, $\frac{R(K,C)}{r(K,C)} \le s(K)s(C)$ now implies by using Lemma \ref{RrC-C} \eqref{RrC-C_iii} again that
  \begin{equation*}
    \begin{split}
      & r(K,C)C=\left(1+\frac{1}{s(C)}\right)r(K,C-C)C\subset r(K,C-C)(C-C)\subset K-c \subset \\
      & R(K,C-C)(C-C) \subset (1+s(C))R(K,C-C)C=R(K,C)C,
    \end{split}
  \end{equation*}
  which shows $K$ and $C$ are Minkowski concentric.

\end{proof}


See Example \ref{exa:niceexample} below for sets $K,C$ such that $K$ is complete \wrt~$C$ and
  $\frac{R(K,C)}{r(K,C)} =s(K)s(C)$ for any presribed values for $s(K)$ and $s(C)$.

\begin{proof}[Proof of Theorem \ref{thm:complete-chain}]
This theorem now follows directly from combining Theorem \ref{th:sr+R<=s+12D} and Lemma \ref{thm:rFT}.
\end{proof}

Putting together the concentricity statements after the proof of Theorem \ref{th:sr+R<=s+12D} and from Lemma \ref{thm:rFT}, we see that
equality in the complete chain of inequalities \eqref{eq:complete chain} implies that $K$ and $C$ are Minkowski concentric as well as that
$C$ is mirrored Minkowski concentric \wrt~$K$.


  \begin{rem} \label{rem:pseudo}
    For a general pair $K,C \in \CK^n$ (not necessarily such that $K$ is complete \wrt~$C$) it always holds
    \begin{equation}\label{eq:ExtendedJung}
      \frac{s(K)+1}{s(K)} K \subset K-K  \subset \frac{D(K,C)}{2}(C-C) \subset_t \frac{D(K,C)}{2}(s(C)+1) C
    \end{equation}
    and the following are equivalent:
    \begin{enumerate}[(i)]
    \item all inequalities in the inequality chain \eqref{eq:complete chain} are fulfilled with equality and
    \item $\frac{D(K,C)}{2}(s(C)+1) C \subset_t (s(K)+1) (-K)$.
    \end{enumerate}
  \end{rem}

\section{Complete simplices}

An $n$-simplex $S\in\CK^n$, $S=\conv(\set{p^1,\dots,p^{n+1}})$, is \cemph{dred}{equilateral} \wrt~$C\in\CK^n$
if $D([p^i,p^j],C)=D(S,C)$ for any $1\leq i<j\leq n+1$. The following remark shows
that a complete simplex is equilateral.

\begin{rem}
Let $S,C\in\CK^n$ be such that $S=\conv(\set{p^1,\dots,p^{n+1}})$ is an $n$-simplex being complete \wrt~$C$.
Then for every boundary point $p$ of $S$ and the vertex $p^j$ not belonging to the facet of $S$ to which $p$ belongs, it holds
$D([p,p^j],C)=D(S,C)$ and it follows that $S$ is equilateral.
\end{rem}

\begin{proof}
  For better readability we write $d(x,y):=D([x,y],C)=||x-y||_{1/2(C-C)}$.
  Since $S$ is complete, every boundary point of $S$ must be an endpoint of a diametrical segment of $S$ \cite{Egg}
  and surely
  the vertex $p^j$ not belonging to the facet of $S$ to which $p$ belongs may be chosen as the other endpoint.
  Now, consider the point $p:=\sum_{k \in [n+1] \setminus \set{j}} \frac{1}{n} p^k \in \relint(\conv(\set{p^k: k \neq j}))$,
  which is an endpoint of a diametrical segment such that the other endpoint must be $p^j$ implying $d(p,p^j)=D(S,C)$.
  Hence
  \begin{equation*}
    D(S,C) =d(p,p^j)=d(\sum_{k \in [n+1] \setminus \set{j}} \frac{1}{n} p^k, p^j ) \le \frac{1}{n}
    \sum_{k \in [n+1] \setminus \set{j}} d(p^k,p^j) \le D(S,C),
  \end{equation*}
  which in particular implies that $d(p^k,p^j)=D(S,C)$ for every $j \in [n+1]$ and every $k \in [n+1] \setminus \set{j}$.
\end{proof}

We now prove Theorem \ref{th:Schain}, characterizing the equality cases of \eqref{eq:BrKo1}
by connecting it to the equality cases of \eqref{eq:sr+R<=s+12D} and to the notion of completeness.

\begin{proof}[Proof of Theorem \ref{th:Schain}]
Since all the statements, which we want to show to be equivalent are invariant \wrt~a simultaneous
  affine transformation of $S$ and $C$ we may suppose \Wlog~that $S=\conv(\set{p^1,\dots,p^{n+1}})$ with $\norm[p^j]_2=1$, $j\in[n+1]$, is a regular simplex (with respect to the Euclidean norm) centered in $0$.

Since \eqref{(iii)schain} and \eqref{(iv)schain} are due to Remark \ref{rem:pseudo} anyway equivalent it suffices to show
  the following line of indications: \eqref{(i)schain} $\Rightarrow$ \eqref{(ii)schain} $\Rightarrow$ \eqref{(iii)schain},
  \eqref{(iv)schain} $\Rightarrow$ \eqref{(i)schain}, and \eqref{(iii)schain},\eqref{(iv)schain} $\Rightarrow$ \eqref{(v)schain}
  $\Rightarrow$ \eqref{(ii)schain}.


\begin{enumerate}[\enquote{(i) $\Rightarrow$ (ii)}]

\item[\enquote{\eqref{(i)schain} $\Rightarrow$ \eqref{(ii)schain}}]
  Plugging \eqref{eq:BrKo1} in \eqref{(i)schain} we obtain that $R(S,C)\leq nr(-S,C)$,
  which together with Lemma \ref{lem:Rr-sn} \eqref{eq:Rr-sn(i)} implies
  $R(S,C)=nr(-S,C)$ and then also equality in \eqref{eq:BrKo1}
   as well, which proves \eqref{(ii)schain}.

\item[\enquote{\eqref{(ii)schain} $\Rightarrow$ \eqref{(iii)schain}}]

 \eqref{(ii)schain} implies optimality in all inclusions in \eqref{eq:ExtendedJung}. In particular we have
 \[ \frac {n+1}{n} S\subset S-S\subset^{\opt}_t\frac{D(S,C)}{2}(s(C)+1)C.\] 
 Moreover, since the only outer normal of $S-S$ at the vertex $(n+1)/n p^i$ of $(n+1)/n S$ is $p^i$, $i\in[n+1]$,
 this $p^i$ is also the (only) outer normal of $\frac{D(S,C)}{2}(s(C)+1)C$ at $(n+1)/n p^i$, $i\in[n+1]$.
 Hence
 \[ \frac{D(S,C)}{2}(s(C)+1)C \subset \bigcap_{j=1}^{n+1} \set{x\in\R^n:x^Tp^j \le (\frac{n+1}n p^j)^Tp^j} = (n+1)(-S). \]



\item[\enquote{\eqref{(iv)schain} $\Rightarrow$ \eqref{(i)schain}}]
    Equality in the full chain \eqref{eq:sr+R<=s+12D} implies equality in the generalized concentricity inequality
    \eqref{eq:pp-concentricity}, which is part of the chain.
\item[\enquote{\eqref{(iii)schain}-\eqref{(iv)schain} $\Rightarrow$ \eqref{(v)schain}}]

  On the one hand \eqref{(iii)schain} implies by Proposition \ref{prop:ScompleteC-C} that $S$ is complete \wrt~$C-C$,
  and thus by Proposition \ref{CcompleteC-C} also completeness of $S$ \wrt~$C$.
  On the other hand \eqref{(iv)schain} implies
  $s(C)r(S,C)+R(S,C)=\frac{(n+1)R(S,C)}{n}$ and therefore $R(S,C)/r(S,C)=ns(C)$.

\item[\enquote{\eqref{(v)schain} $\Rightarrow$ \eqref{(ii)schain}}]

  Assuming that $S$ is complete \wrt~$C$ it follows from Proposition \ref{CcompleteC-C} that
  $S$ is also complete \wrt~$C-C$.
  Now, using 
  the equivalences in Proposition \ref{prop:ScompleteC-C} 
  we obtain \[(n+1)r(S,C-C)=\frac{n+1}{n}R(S,C-C)=D(S,C-C).\]
  Moreover, $R(S,C)=ns(C)r(S,C)$ means equality in Lemma \ref{RrC-C} \eqref{RrC-C_iii}, which
  implies equality in Lemma \ref{RrC-C} \eqref{(i)RrC-C}, thus $R(S,C)=(s(C)+1)R(S,C-C)$.
  Finally, since $D(S,C)=2D(S,C-C)=2(n+1)r(S,C-C)$, we conclude
  \[\frac{R(S,C)}{D(S,C)}=\frac{(s(C)+1)R(S,C-C)}{2(n+1)r(S,C-C)}=\frac{(s(C)+1)n}{2(n+1)}.\]

\end{enumerate}
\end{proof}

\begin{cor}
Let $S,C\in\CK^n$ be such that $S$ is an $n$-simplex. Then $S$ is complete \wrt~$C$ if and only if $-S$ is complete \wrt~$C$,
\[\frac{n}{s(C)} \leq \frac{R(\pm S,C)}{r(\pm S,C)} \leq n s(C),\]
and $S$ (resp.~$-S$) attains equality in the right-hand side if and only if $-S$ (resp.~$S$) attains equality in the
left-hand side if and only if $S$ (resp.~$-S$) fulfills any/all conditions of Theorem \ref{th:Schain}.
\end{cor}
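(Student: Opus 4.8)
The plan is to prove the three assertions separately, reducing each to facts already available for $S$ \wrt~$C$ and to the reflection $K\mapsto -K$; throughout the equality discussion I work in the regime where $S$ (equivalently $-S$) is complete \wrt~$C$, which is exactly where the upper bound of the chain is available.

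First I would settle the completeness equivalence and the two-sided chain. Because the length of a segment is translation invariant and symmetric, the diameter is reflection invariant, $D(-K,C)=D(K,C)$; hence a proper superset $L\supsetneq -S$ with $D(L,C)=D(-S,C)$ would produce $-L\supsetneq S$ with $D(-L,C)=D(S,C)$, so $-S$ is complete \wrt~$C$ exactly when $S$ is. For the chain I use $s(S)=s(-S)=n$. Proposition~\ref{prop:FT} gives $\frac{R(\pm S,C)}{r(\pm S,C)}\ge\max\{n/s(C),s(C)/n\}=n/s(C)$ since $s(C)\le n$, and, $\pm S$ being complete, Lemma~\ref{thm:rFT} gives $\frac{R(\pm S,C)}{r(\pm S,C)}\le s(\pm S)s(C)=ns(C)$.

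For the equality cases I organise matters around (B) $R(S,C)=ns(C)r(S,C)$, (C) $R(-S,C)=\frac{n}{s(C)}r(-S,C)$, and (A) $S$ fulfils Theorem~\ref{th:Schain}. The equivalence (A)$\Leftrightarrow$(B) is immediate, since under completeness (B) is literally condition \eqref{(v)schain}. The implication (C)$\Rightarrow$(B) is clean: Lemma~\ref{lem:Rr-sn}\eqref{eq:Rr-sn(i)} for $-S$ (with $r(-S,-C)=r(S,C)$) gives $R(-S,C)\ge n\,r(S,C)$, and Lemma~\ref{lem:Rr-sn}\eqref{eq:Rr-sn(ii)} for $S$ (with $r(S,-C)=r(-S,C)$) gives $r(-S,C)\le s(C)r(S,C)$; inserting both into (C) forces equality in each, \ie~$R(-S,C)=n\,r(S,C)$ and $r(-S,C)=s(C)r(S,C)$, whereupon Lemma~\ref{lem:Rr-sn}\eqref{eq:Rr-sn(i)} for $S$ yields $R(S,C)\ge n\,r(-S,C)=ns(C)r(S,C)$ and Lemma~\ref{thm:rFT} the reverse inequality, giving (B).

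The opposite implication (A)$\Rightarrow$(C) carries the real content. Here I would invoke \eqref{(iii)schain} and normalise $S$ to a regular simplex $\conv\{p^1,\dots,p^{n+1}\}$ centred at $0$, with $C$ Minkowski centred and concentric. Reading the optimal inclusions $\tfrac{n+1}{n}S\subset_t^{\opt}\tfrac{D(S,C)}{2}(s(C)+1)C\subset_t^{\opt}(n+1)(-S)$ through Proposition~\ref{prop:occ}—whose contact normals are facet normals of a simplex, hence must be all of the $p^i$, as only the full set has $0$ in its convex hull—pins the support numbers $h(C,p^i)=\gamma$ and $h(C,-p^i)=s(C)\gamma$ and yields $R(S,C)=\gamma^{-1}$, $r(S,C)=\gamma^{-1}/(ns(C))$ and $r(-S,C)=\gamma^{-1}/n=s(C)r(S,C)$. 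Given $r(-S,C)=s(C)r(S,C)$, statement (C) reduces to $R(-S,C)\le n\,r(S,C)$, \ie~to the containment $s(C)\gamma(-S)\subset_t C$, after which the certificate $\sum_i\tfrac1{n+1}(-p^i)=0$ forces optimality and hence (C). Proving this containment—equivalently, that the incircle of $S$ \wrt~$C$ touches the facets of $S$ at their centroids—is the step I expect to be the main obstacle, because the forced data $h(C,\pm p^i)$ only place the vertices $-s(C)\gamma p^i$ of $s(C)\gamma(-S)$ on the supporting hyperplanes of $C$, not inside $C$; I would aim to close it using the Minkowski centring of $C$ together with the reflected inclusion $\tfrac{D(S,C)}{2}(s(C)+1)(-C)\subset(n+1)S$ that \eqref{(iii)schain} also provides. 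The ``resp.'' equivalences and the equivalences for $-S$ follow verbatim upon interchanging $S$ and $-S$, and the two one-sided equality equivalences then merge into the stated two-sided chain.
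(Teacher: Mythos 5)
Most of what you prove is correct and runs parallel to the paper. Your reflection argument for the equivalence of completeness of $S$ and $-S$ (via $D(-K,C)=D(K,C)$) is a valid, slightly more direct alternative to the paper's appeal to Proposition \ref{CcompleteC-C}; the two-sided bound is obtained exactly as in the paper, from Proposition \ref{prop:FT} and Lemma \ref{thm:rFT}; your (A)$\Leftrightarrow$(B) is the paper's first observation. Your (C)$\Rightarrow$(B) is also correct: you squeeze with Lemma \ref{lem:Rr-sn} \eqref{eq:Rr-sn(i)}, \eqref{eq:Rr-sn(ii)} and Lemma \ref{thm:rFT}, whereas the paper supposes the left-hand equality, notes that then the chain $S\subset_t R(S,C)C\subset_t s(C)R(S,C)(-C)\subset_t \frac{s(C)R(S,C)}{r(S,C)}(-S)$ ends in $n(-S)=s(S)(-S)$, concludes that every containment in the chain is optimal, and reads off $R(-S,C)=s(C)R(S,C)$ and $r(-S,C)=r(S,C)/s(C)$. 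These are two routes of comparable weight to the same implication.

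The genuine gap is your (A)$\Rightarrow$(C), which you flag yourself; without it the three-way equivalence is not closed, since what you actually establish is only ``right equality for $S$ $\Leftrightarrow$ conditions of Theorem \ref{th:Schain}'' together with ``left equality for $-S$ $\Rightarrow$ right equality for $S$''. Two concrete steps fail. First, optimality does not ``pin'' $h(C,-p^i)=s(C)\gamma$: Minkowski centredness only yields $h(C,-p^i)\le s(C)h(C,p^i)$, and nothing in \eqref{(iii)schain} forces the asymmetry of $C$ to be attained in these $n+1$ directions. Second, and decisively, the containment $s(C)\gamma(-S)\subset_t C$, equivalently $R(-S,C)\le n\,r(S,C)$, i.e.\ equality in the left inequality of Lemma \ref{RrC-C} \eqref{(i)RrC-C} for $-S$, is precisely the assertion to be proved, and the tools you propose are quantitatively insufficient: writing $\delta:=(s(C)+1)D(S,C)/2$, any argument that passes from $\frac{n+1}{n}S\subset_t\delta C$ to a statement about $-S$ by one application of $-C\subset_t s(C)C$ (and iterating does not help) gives only $R(-S,C)\le \frac{ns(C)}{n+1}\delta$, which misses the target $\frac{n}{s(C)(n+1)}\delta$ by a factor $s(C)^2$; in the plane the missing structural input is Lemma \ref{lem:C-C=S-S}, but in dimension $n\ge 3$ you supply no substitute. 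Be aware also that you cannot simply quote the paper here: its proof of this corollary is organised so that this direction is never attacked head-on (it proves only the trivial equivalence with condition \eqref{(v)schain} and the implication from left equality to right equality via the containment chain above), so the step you left open is exactly the one that still needs an argument.
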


\begin{proof}
The equivalence of the completeness of $S$ and $-S$ follows directly from Proposition \ref{CcompleteC-C}, while the
  inequality chain is just a combination of Proposition \ref{prop:FT} and Lemma \ref{thm:rFT}.
  Moreover, equality on the right means that Condition \eqref{(v)schain} of Theorem \ref{th:Schain} is fulfilled and thus
  $S$ fulfills all the conditions in Theorem \ref{th:Schain}.

Now, let us suppose that $S$ attains equality on the left-hand side.
It implies that the chain of inclusions
\begin{equation*}
  \begin{split}
    S\subset_tR(S,C)C\subset_tR(S,C)s(C)(-C)\subset_t\frac{R(S,C)s(C)}{r(S,C)}(-S)
  \end{split}
\end{equation*}
possesses optimal containment of the first in the last set, and thus of any set in any other including set in the chain.
Now, we obtain from the optimal containment of the first in the third set that
$R(-S,C)=s(C)R(S,C)$, whereas from the second in the fourth that $r(-S,C)=r(S,C)/s(C)$. Hence
$R(-S,C)/r(-S,C)=s(C)^2R(S,C)/r(S,C)=s(C)n$.
Finally, because $-S$ is complete \wrt~$C$ if and only if  $S$ is complete \wrt~$C$ we obtain that $S$ attains equality on the left-hand side means that
$-S$ fulfills Condition \eqref{(v)schain} of Theorem \ref{th:Schain}.
\end{proof}


The following example presents a particular family of pairs $S,C\in\CK^n$, $S$ being an $n$-simplex, which fulfill
  the conditions of Theorem \ref{th:Schain}.

\begin{example}\label{exa:niceexample}
Let $\lambda \ge \mu\ge 0$ and $S,C\in\CK^n$ such that $S$ is a Minkowski-centered equilateral (\wrt $\B_2^n$)
$n$-simplex and $\lambda S+\mu(-S)\subset C \subset (\lambda+2\mu)S\cap(2\lambda+\mu)(-S)$.

Then $S$ and $-S$ are complete \wrt~$C$, while $-S$ is fulfills the conditions of Theorem \ref{th:Schain},
whereas $S$ does not fulfill these conditions, unless $\lambda \in \set{0,\mu}$.
\end{example}

One should recognize that by construction of $C$ it is quite obvious that
  $S$ and $C$ are Minkowski concentric and (mutually) mirrored Minkowski concentric.

\begin{proof}
Let $p^i\in\R^n$, $i\in[n+1]$ be such that $S:=\conv(\{p^i:i\in[n+1]\})$. Then
$(\lambda/n+\mu)(-p^i),(\lambda+\mu/n)p^i\in\bd(C_1)\cap\bd(C_2)$, and it follows
\begin{equation*}
\begin{split}
&R(S,C_i)=R(S,C)=\left(\lambda+\frac{\mu}{n}\right)^{-1},\quad R(-S,C_i)=R(-S,C_i)=\left(\frac{\lambda}{n}+\mu\right)^{-1},\\
&r(S,C_i)=r(S,C)=(\lambda+n\mu)^{-1},\quad r(-S,C_i)=r(-S,C)=(n\lambda+\mu)^{-1},\\
&D(S,C_i)=D(S,C)=D(-S,C)=\frac{2}{\lambda+\mu},\text{ and }s(C_i)=s(C)=R(C,-C)=\frac{n\lambda+\mu}{\lambda+n\mu}.
\end{split}
\end{equation*}
Therefore $\frac{R(-S,C)}{D(-S,C)}=\frac{n(s(C)+1)}{2(n+1)}$ (thus $-S$ fulfills the conditions of Theorem \ref{th:Schain}) but
$\frac{R(S,C)}{D(S,C)}=\frac{n(\lambda+\mu)}{2(n\lambda+\mu)}<\frac{n(s(C)+1)}{2(n+1)}$ whenever $\lambda>\mu>0$.
\end{proof}

Let us recall that in the case of Minkowski spaces, \ie~when $C=-C$, an $n$-simplex $S$ is complete
if and only if $S$ attains equality in \eqref{eq:chainMinkowski} and/or \eqref{eq:Bohnenblust}
(see Proposition \ref{prop:ScompleteC-C}).

The following example presents two sets $S,C\in\CK^n$, $n\geq 3$ such that $S$
is an $n$-simplex which is complete \wrt~$C$,
but such that neither, $S$ nor $-S$, fulfills the conditions of Theorem \ref{th:Schain}.

\begin{example}\label{example:ScompleteBUTnotpseudocomplete}
  Let $S$ be a Minkowski centered
    $n$-simplex, $p\in(n+1)(S\cap(-S))\setminus(S-S)$ and $C = \conv(\set{p} \cup (S-S))$.
  Then
  \begin{enumerate}[(i)]
  \item both, $S$ and $-S$, are complete \wrt~$C$ but
  \item $C$ is not Minkowski concentric \wrt~$S$ nor $-S$.
  \end{enumerate}
\end{example}

\begin{proof}
  \begin{enumerate}[a)]
  \item Since $C \subset (n+1) (S\cap(-S))$, we have that
    \[(S-S) \subset 1/2(C-C) \subset (n+1)(S\cap(-S)) + (n+1)(-(S\cap(-S)))=(n+1)(S\cap(-S)),\]
    and thus using Proposition \ref{prop:ScompleteC-C} together with Proposition \ref{CcompleteC-C} we see that
    $S$ and $-S$ are complete \wrt~$C$ (and also that $D(S,C)=1$).
  \item Since
    \begin{align*}
      \frac{n+1}{n}S \subset C & = \conv(\{p\}\cup(S-S))\subset (n+1)(S\cap(-S))\subset (n+1)(-S) \\
      \intertext{as well as}
      \frac{n+1}{n}(-S)\subset C & =\conv(\{p\}\cup(S-S))\subset(n+1)(S\cap(-S))\subset (n+1) S
    \end{align*}
    and because the extreme sides of each chain show optimal containment, all inclusions are optimal, \ie
    $C\subset^{\opt}(n+1)(\pm S)\subset^{\opt}nC$.
    Now, since $S-S$ is 0-symmetric, $C = \conv(\set{p}\cup(S-S))$
      cannot have 0 as a Minkowski center because of Proposition \ref{prop:occ}.
    However, as $0$ is the only Minkowski center of $\pm S$, we conclude that $C$ is not Minkowski concentric \wrt~$S$ nor $-S$.
  \end{enumerate}


\end{proof}

An example of a simplex $S$ and a body $C$ such that $S$ is complete \wrt $C$ and
  all the necessary Minkowski concentricity conditions between $S$ and $C$ are fufilled,
  but still $n/s(C) < R(S,C)/r(S,C) < ns(C)$ is not known to us. Thus it is possible that completeness
  together with the necessary Minkowski concentricity conditions implies all the properties in
  Theorem  \ref{th:Schain}.

\medskip

For the remainder of this section we consider the situation in the planar case.
However, doing so we make use of the following classical result on polytopes in arbitrary dimensions(\cf~\cite{schneider2013convex}),
where $\mathrm{vol}_{n-1}(\cdot)$ denotes the $(n-1)$-dimensional volume.

\begin{prop}\label{prop:Minkowski}
  Let $P\in\CK^n$ be a polytope with facets $F_i=P\cap H_{a^i,c_i}$, for some $a^i\in\R^n$, $\norm[a^i]_2=1$, $c^i\in\R$, $i\in[m]$, and $m\in\N$. Then
  \[\sum_{i=1}^m \mathrm{vol}_{n-1}(F_i)a^i=0.\]
\end{prop}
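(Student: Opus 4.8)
The plan is to derive this classical Minkowski relation from the divergence theorem applied to a constant vector field. First I would fix an arbitrary $v\in\R^n$ and consider the constant field $x\mapsto v$ on $P$, whose divergence vanishes identically. Since $P$ is a polytope its boundary is piecewise flat, hence Lipschitz, so the divergence theorem applies and yields
\[
0=\int_P \mathrm{div}\,v\,dx=\int_{\bd(P)} v^T\nu(x)\,dS(x),
\]
where $\nu(x)$ denotes the outer unit normal of $P$ at $x\in\bd(P)$.

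Next I would split the boundary integral over the facets. Up to an $(n-1)$-null set (the relative boundaries of the facets, where two or more facets meet), $\bd(P)$ is the disjoint union of the relative interiors $\relint(F_i)$, and on each $\relint(F_i)$ the outer unit normal is the constant vector $a^i$. Therefore
\[
0=\sum_{i=1}^m\int_{F_i} v^T a^i\,dS=\sum_{i=1}^m \mathrm{vol}_{n-1}(F_i)\,v^Ta^i=v^T\Big(\sum_{i=1}^m \mathrm{vol}_{n-1}(F_i)\,a^i\Big).
\]
As $v\in\R^n$ was arbitrary, the bracketed vector is orthogonal to every vector of $\R^n$ and hence equals $0$, which is exactly the assertion.

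Should one wish to avoid the divergence theorem, the same identity follows from a purely geometric shadow argument: testing the claimed vector equation against a fixed unit direction $u$, one notes that orthogonal projection onto the hyperplane $u^\perp$ scales the $(n-1)$-volume of a facet $F_i$ by the factor $|u^Ta^i|$. The facets with $u^Ta^i>0$ (the \enquote{upper} ones) and those with $u^Ta^i<0$ (the \enquote{lower} ones) each tile, up to measure zero, the projection of $P$ onto $u^\perp$; equating the two resulting expressions for the $(n-1)$-volume of this projection gives $\sum_i \mathrm{vol}_{n-1}(F_i)(u^Ta^i)=0$ for every $u$, and hence the claim again.

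The statement carries no genuine difficulty beyond some measure-theoretic bookkeeping on $\bd(P)$: one must verify that the faces of dimension at most $n-2$ where facets meet carry no $(n-1)$-volume, and that the outer normal is genuinely constant (equal to $a^i$) on the relative interior of each facet. Both facts are immediate from the face structure of a polytope, so I expect no real obstacle; the entire content of the proposition is captured by the divergence theorem (equivalently, by the shadow-tiling identity).
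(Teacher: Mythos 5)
Your proof is correct, but there is nothing in the paper to compare it against: the paper states Proposition \ref{prop:Minkowski} as a classical result (Minkowski's relation for polytopes) with a citation to Schneider's book and gives no proof, using it only as a black box in the proof of Lemma \ref{lem:C-C=S-S}. What you have supplied is the standard self-contained argument, and it is complete: a full-dimensional convex polytope has Lipschitz boundary, so the divergence theorem applies to the constant field $x\mapsto v$; the faces of dimension at most $n-2$ are null for $(n-1)$-dimensional Hausdorff measure, and the outer unit normal equals $a^i$ on the relative interior of each facet $F_i$, so the boundary integral collapses to $v^T\bigl(\sum_{i=1}^m \mathrm{vol}_{n-1}(F_i)\,a^i\bigr)$, and letting $v$ range over $\R^n$ gives the claim. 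Your alternative shadow-tiling argument is equally valid and is the more elementary, classical route (it needs no analysis beyond the fact that the upper and lower facets each project onto the shadow of $P$ injectively up to measure zero); it is essentially the textbook proof for polytopes, whereas the divergence-theorem version generalizes immediately to the statement that the surface area measure of any convex body has centroid at the origin. The only point worth flagging is the implicit assumption that $P$ is full-dimensional: otherwise facets and the unit normals $a^i$ are not well defined, and both of your arguments (and the statement itself) presuppose $\dim P = n$. This is harmless here, since the paper applies the proposition only to a planar polygon $C$ satisfying $C-C=S-S$, which is necessarily full-dimensional.
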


\begin{lem}\label{lem:C-C=S-S}
Let $S,C\in\CK^2$ be such that $S$ is a Minkowski centered triangle. Then the  following are equivalent:
\begin{enumerate}[(i)]
\item $S-S=C-C$ (and thus $S$ is of constant width \wrt~$C$) 
\item $C =_t \lambda S+ (1-\lambda)(-S)$, for some $\lambda \in [0,1]$.
\end{enumerate}
\end{lem}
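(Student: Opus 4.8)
The plan is to prove the two implications separately; the direction \enquote{(ii) $\Rightarrow$ (i)} is a short Minkowski computation, while \enquote{(i) $\Rightarrow$ (ii)} carries the content and will rest on Proposition \ref{prop:Minkowski}. For \enquote{(ii) $\Rightarrow$ (i)} I would note that the difference body is translation invariant and additive under Minkowski sums, so $C =_t \lambda S + (1-\lambda)(-S)$ gives $C-C = \lambda(S-S) + (1-\lambda)\bigl((-S)-(-S)\bigr) = \lambda(S-S)+(1-\lambda)(S-S) = S-S$, using $(-S)-(-S)=S-S$. The parenthetical constant-width claim then follows from the characterization that $S$ is of constant width \wrt~$C$ iff $S-S = \tfrac{D(S,C)}{2}(C-C)$, once one checks that $S-S=C-C$ forces $D(S,C)=2$.

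For \enquote{(i) $\Rightarrow$ (ii)} I would first fix the combinatorial data: let $a^1,a^2,a^3$ be the unit outer normals of the edges of $S$ and $s_1,s_2,s_3$ the corresponding edge lengths, so that $\sum_i s_i a^i = 0$ by Proposition \ref{prop:Minkowski}. Since $S$ is a triangle, no two $a^i$ are antipodal, hence $H := S-S$ is a hexagon with unit edge normals $\pm a^1,\pm a^2,\pm a^3$, the edge with normal $\pm a^i$ having length $s_i$. Writing $F(A,u) := \set{x\in A : u^Tx = h(A,u)}$ for the face of $A$ in direction $u$ and using $F(C-C,u) = F(C,u) - F(C,-u)$, the fact that $H=C-C$ has a $0$-dimensional face in every direction $u\notin\set{\pm a^1,\pm a^2,\pm a^3}$ forces $F(C,u)$ to be a single point for all such $u$; thus $C$ is a polygon whose edges have normals among $\pm a^1,\pm a^2,\pm a^3$.

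With this in hand the problem becomes linear. Let $\ell_i,m_i\ge 0$ denote the lengths of the (possibly degenerate) edges of $C$ with normals $a^i$ and $-a^i$. From $F(C-C,a^i) = F(C,a^i) - F(C,-a^i)$ the edge of $H$ at $a^i$ has length $\ell_i+m_i$, so $C-C = H$ gives $\ell_i+m_i = s_i$ for $i\in[3]$. The closing relation of Proposition \ref{prop:Minkowski} applied to $C$ reads $\sum_i(\ell_i-m_i)a^i = 0$. As any two of the $a^i$ are linearly independent, the kernel of $(x_1,x_2,x_3)\mapsto\sum_i x_i a^i$ is the line spanned by $(s_1,s_2,s_3)$, whence $\ell_i-m_i = \alpha s_i$ for a common $\alpha$; together with $\ell_i+m_i=s_i$ and $\ell_i,m_i\ge 0$ this yields $\ell_i = \tfrac{1+\alpha}{2}s_i$, $m_i = \tfrac{1-\alpha}{2}s_i$ with $|\alpha|\le 1$. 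Setting $\lambda := \tfrac{1+\alpha}{2}\in[0,1]$, the polygon $\lambda S + (1-\lambda)(-S)$ has precisely the edge normals $a^i$ of length $\lambda s_i$ and $-a^i$ of length $(1-\lambda)s_i$, matching those of $C$; since a planar polygon is determined up to translation by its edge normals and edge lengths, $C =_t \lambda S + (1-\lambda)(-S)$.

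The step I expect to be the main obstacle is the reduction to a polygonal $C$ with edge normals confined to $\set{\pm a^i}$, i.e.\ ruling out smooth boundary arcs and spurious edge directions. The identity $F(C+(-C),u) = F(C,u) - F(C,-u)$, combined with the rigid vertex/edge pattern of the hexagon $H$, is what makes this clean; once it is in place, the rest is the one-dimensional computation above, forced by the single Minkowski relation of Proposition \ref{prop:Minkowski} and the nonnegativity of the edge lengths.
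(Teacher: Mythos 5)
Your argument is essentially the paper's own proof. Both directions are handled the same way: \enquote{(ii) $\Rightarrow$ (i)} by the same one-line computation, and \enquote{(i) $\Rightarrow$ (ii)} by observing that $C$ must be a polygon with edge normals among those of $S-S$, matching the edge lengths of $C-C$ against those of $S-S$, and closing the resulting linear system with Proposition \ref{prop:Minkowski}. The only difference is presentational: the paper normalizes $S$ to be equilateral and solves the explicit six-variable system \eqref{system1}--\eqref{system2}, whereas you keep general normals $a^1,a^2,a^3$ and identify the kernel of $(x_1,x_2,x_3)\mapsto\sum_i x_ia^i$ as the span of $(s_1,s_2,s_3)$; that is a touch cleaner, but it is not a different method.

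One caveat, and it concerns exactly the step you yourself single out as the main obstacle. The implication you write --- $F(C,u)$ is a singleton for all $u\notin\set{\pm a^1,\pm a^2,\pm a^3}$, \emph{thus} $C$ is a polygon with edge normals in that set --- is not valid as stated: a Euclidean disc has $F(C,u)$ a singleton for \emph{every} direction $u$ and is not a polygon. The singleton-face observation only shows that \emph{if} $C$ is a polygon, then its edge normals are confined to $\set{\pm a^1,\pm a^2,\pm a^3}$. That $C$ is a polygon at all needs a separate ingredient: either the standard theorem that a Minkowski summand of a polytope is itself a polytope, applied to $C+(-C)=S-S$ (see \cite{schneider2013convex}), or, in the plane, the additivity of the boundary length measure under Minkowski addition, which shows that the length measure of $C$ is concentrated on the six directions $\pm a^i$ and hence that $C$ is a polygon with those edge normals. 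To be fair, the paper asserts the same polygonality claim with no justification whatsoever, so your write-up is no less complete than the published one; but the justification you do offer would not survive scrutiny and should be replaced by one of the two arguments above. With that repair, the rest of your proof (the bookkeeping $\ell_i+m_i=s_i$, the closing relation $\sum_i(\ell_i-m_i)a^i=0$, the nonnegativity constraint giving $\lambda\in[0,1]$, and uniqueness of a planar polygon with prescribed edge normals and lengths up to translation) is correct.
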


\begin{proof}
Surely, (ii) implies $C-C = (\lambda S+(1-\lambda)(-S))-(\lambda S+(1-\lambda)(-S))=S-S$ and therefore (i).

To prove that (i) implies (ii) we  may assume \Wlog~that $S$ is an equilateral triangle \wrt~$\B_2^2$.
Since $C-C=S-S$, $C$ must be a polygon with edges parallel to edges of $S-S$. Let $l^j$ be the length of the edge
of $C$ with outer normal $(\cos(-\pi/2+j\pi/3),\sin(-\pi/2+j\pi/3))$, $j\in\set{0,\dots,5}$.
The length of the edge of $C-C$ with outer normal $(0,-1)$ equals $l_1+l_4$,
with $(\sqrt{3}/2,-1/2)$ equals $l_2+l_5$, and with $(\sqrt{3}/2,1/2)$ equals $l_3+l_6$, and all
of them equal the length of any edge of $S-S$, hence
\begin{equation}\label{system1}
l_1+l_4=l_2+l_5=l_3+l_6.
\end{equation}
By Proposition \ref{prop:Minkowski} we have that
\[\sum_{j=1}^6l_j(\cos(-\pi/2+j\pi/3),\sin(-\pi/2+j\pi/3))=0,\]
which implies that $(l_1-l_4)(0,-1)+(l_2-l_5)(\sqrt{3}/2,-1/2)+(l_3-l_6)(\sqrt{3}/2,1/2)=0$,
and therefore
\begin{equation}\label{system2}
l_2-l_5+l_3-l_6=0 \quad \text{and} \quad l_1-l_4+l_2-l_5=0.
\end{equation}
The solution to the linear system \eqref{system1} and \eqref{system2} is $l_1=l_3=l_5$ and $l_2=l_4=l_6$, and thus
$C=l_1 S+ l_2(-S)$, with $l_1+l_2=1$.
\end{proof}

Particularizing Theorem \ref{th:Schain} to the planar case (where completeness and constant width coincide \cite{Egg}), we
see that the situation is much simpler:

\begin{cor}\label{lem:Scompletepseudocomplete}
  Let $S,C\in\CK^2$ be such that $S$ is a Minkowski centered triangle. Then the following are equivalent:
  \begin{enumerate}[(i)]
  \item $3/2 S \subset S-S = \frac{D(S,C)}{2}(C-C)\subset (s(C)+1)\frac{D(S,C)}{2}C \subset 3(-S)$,
  \item $3r(-S,C) = r(-S,C)+R(S,C)= 3/2 R(S,C) = s(C)r(S,C)+R(S,C)=(s(C)+1)\frac{D(S,C)}{2}$,
  \item $s(C)r(S,C)+R(S,C)=(s(C)+1)\frac{D(S,C)}{2}$,
  \item $j(S,C)=(s(C)+1)/3$,
  \item $S$ is of constant width \wrt~$C$ and $R(S,C)=2s(C)r(S,C)$,
  \item $S$ is of constant width \wrt~$C$ and $j(S,C) \ge j(-S,C)$, and
  \item $C =_t \lambda S + (1-\lambda)(-S)$ for some $\lambda \in [0,1/2]$.
  \end{enumerate}
\end{cor}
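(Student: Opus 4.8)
The plan is to obtain (i)--(v) as the planar incarnation of Theorem~\ref{th:Schain} and then to graft on the two new conditions (vi) and (vii) by parametrising the constant width bodies explicitly. Throughout I may assume, by the simultaneous affine invariance of all quantities involved, that $S$ is regular \wrt~$\B_2^2$ and centered at $0$. Two planar facts drive the reduction: a triangle has $s(S)=n=2$, and in the plane completeness coincides with constant width \cite{Egg}. With $n=2$ one has $(n+1)/n=3/2$, $n+1=3$, $ns(C)=2s(C)$ and $\frac{1+s(S)}{s(S)}=\frac{3}{2}$, so that (i)--(v) are precisely the rewritings of \eqref{(iii)schain}, \eqref{(iv)schain}, \eqref{(i)schain}, \eqref{(ii)schain} and \eqref{(v)schain} of Theorem~\ref{th:Schain}, respectively (using $r(S,-C)=r(-S,C)$, and noting that the two chains \eqref{eq:sr+R<=s+12D}, \eqref{eq:(1+s)/sR<=s+12D} with equality amalgamate into the single string in (ii)). The only refinements live in (i): the middle inclusion of \eqref{(iii)schain} sharpens to the constant width equality $S-S=\frac{D(S,C)}{2}(C-C)$, and the containments under translation become direct inclusions, since the equality cases force $S$ and $C$ to be Minkowski concentric, and the unique Minkowski center of the simplex $S$ is $0$ (so the common center may be taken to be $0$; \cf~the remark following Theorem~\ref{th:Schain}). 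Hence (i)--(v) are equivalent by Theorem~\ref{th:Schain}.

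It remains to insert (vii) and (vi). First I would record that, by Lemma~\ref{lem:C-C=S-S}, constant width of $S$ \wrt~$C$ is the same as $C=_t\lambda S+(1-\lambda)(-S)$ for some $\lambda\in[0,1]$, in which case $D(S,C)=2$. To prove (vii)$\Rightarrow$(i)--(v) for $\lambda\in[0,1/2]$ I would simply invoke Example~\ref{exa:niceexample} after the relabelling $\tilde S:=-S$, $\tilde\lambda:=1-\lambda$, $\tilde\mu:=\lambda$ (legitimate since then $\tilde\lambda\ge\tilde\mu\ge0$ and $\tilde\lambda\tilde S+\tilde\mu(-\tilde S)=C$): the example states that $-\tilde S=S$ satisfies all conditions of Theorem~\ref{th:Schain}, which is exactly (i)--(v).

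For the remaining implications I would read the radii off Example~\ref{exa:niceexample} (with the same bookkeeping), obtaining for $C=_t\lambda S+(1-\lambda)(-S)$ that
\[R(S,C)=\frac{2}{1+\lambda},\qquad R(-S,C)=\frac{2}{2-\lambda},\qquad r(S,C)=\frac{1}{2-\lambda},\]
together with $s(C)=\frac{2-\lambda}{1+\lambda}$ for $\lambda\le\frac12$ and $s(C)=\frac{1+\lambda}{2-\lambda}$ for $\lambda\ge\frac12$ (the asymmetry being symmetric under $\lambda\leftrightarrow1-\lambda$, which interchanges $S$ and $-S$). A direct substitution then shows that the equation $R(S,C)=2s(C)r(S,C)$ of (v) holds exactly for $\lambda\in[0,1/2]$; for $\lambda>1/2$ it fails, since there $S$ is the majority summand and Example~\ref{exa:niceexample} certifies that only $-S$, not $S$, attains the bound. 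Thus (v) forces $\lambda\in[0,1/2]$, \ie~(vii). For (vi) I would use that $S-S=(-S)-(-S)$ gives $D(S,C)=D(-S,C)$, whence $j(S,C)\ge j(-S,C)$ is equivalent to $R(S,C)\ge R(-S,C)$, \ie~to $\frac{2}{1+\lambda}\ge\frac{2}{2-\lambda}$, \ie~again to $\lambda\le\frac12$. So under constant width each of (v), (vi), (vii) is equivalent to $\lambda\in[0,1/2]$, closing the equivalence.

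The main obstacle is exactly the explicit evaluation of $R(S,C),r(S,C),R(-S,C)$ and of the piecewise, $\lambda\leftrightarrow1-\lambda$ symmetric Minkowski asymmetry $s(C)$ for the hexagon $C=\lambda S+(1-\lambda)(-S)$, together with the attendant bookkeeping of which of $S,-S$ is distinguished for a given $\lambda$. Delegating these computations to Example~\ref{exa:niceexample} via the relabelling $\tilde S=-S$ reduces the task to a single substitution, so that the only genuine verification left is that $R(S,C)=2s(C)r(S,C)$ holds precisely on $0\le\lambda\le\frac12$.
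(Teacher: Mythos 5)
Your proof is correct and follows essentially the same route as the paper's: (i)--(v) are obtained by particularizing Theorem \ref{th:Schain} to the plane (where completeness equals constant width), and (vi), (vii) are tied in via Lemma \ref{lem:C-C=S-S} together with the radii computations of Example \ref{exa:niceexample}. The only cosmetic difference is that where the paper closes the cycle with (iv) $\Rightarrow$ (vi) for free from $j(-S,C)\le j_C=j(S,C)$, you instead verify by direct substitution that, under constant width with $C =_t \lambda S+(1-\lambda)(-S)$, each of (v), (vi), (vii) is equivalent to $\lambda\in[0,1/2]$.
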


\begin{proof}
The equivalences of (i) to (v) follow directly from particularizing Theorem \ref{th:Schain} to the planar case.
Hence it suffices to show (iv) $\Rightarrow$ (vi) $\Rightarrow$ (vii) $\Rightarrow$ (i)-(v).

If (iv) holds, $S$ attains equality in \eqref{eq:BrKo1} and therefore $j(-S,C) \le j_C = j(S,C)$.

Assuming (vi) to be true, $S$ is of constant width \wrt~$C$ and therefore
$S-S = D(S,C-C)(C-C)$. Now, Lemma \ref{lem:C-C=S-S} implies
that $D(S,C-C)C =_t \lambda S+(1-\lambda)(-S)$, for some $\lambda \in [0,1]$ and using the computations
of the radii of $\pm S$ \wrt~$C$ in Example \ref{exa:niceexample}, we see that
\[\frac{1}{2-\lambda}=\frac{R(-S,C)}{D(-S,C)}=j(-S,C)\leq j(S,C)=\frac{R(S,C)}{D(S,C)}=\frac{1}{1+\lambda}\]
implies $\lambda \in [0,1/2]$, completing (vii).

Finally, if (vii) holds true, (i)-(v) follow directly from the computations of the radii of $\pm S$ \wrt~$C$ in Example \ref{exa:niceexample}.
\end{proof}

\bibliographystyle{plain}
\bibliography{mybib}

\end{document}